\newtheorem{theorem}{Theorem}[section]
\newtheorem{proposition}[theorem]{Proposition}
\newtheorem{conjecture}[theorem]{Conjecture}
\newtheorem{question}[theorem]{Question}
\newtheorem{lemma}[theorem]{Lemma}
\newtheorem{corollary}[theorem]{Corollary}
\theoremstyle{definition}
\def\epsilon{\varepsilon}
\DeclareMathOperator{\dist}{dist}
\DeclareMathOperator{\BS}{\mathcal{B}}
\DeclareMathOperator{\cI}{\mathcal{I}}
\title{A Counterexample to a Conjecture of Lov\'{a}sz}
\author[1]{Alexander Clow\,}
\author[2]{Penny Haxell\,\thanks{Supported in part by an NSERC Discovery Grant, and in part by National Science
Foundation Grant No. DMS-1928930, while the author was in
residence at the Simons Laufer Mathematical Sciences Institute in
Berkeley, California, during the Spring 2025 semester.}}
\author[3]{Bojan Mohar\,\thanks{Supported in part by the NSERC Discovery Grant R832714 (Canada),
by the ERC Synergy grant (European Union, ERC, KARST, project number 101071836),
and by the Research Project N1-0218 of ARIS (Slovenia).}\thanks{On leave from:
FMF, Department of Mathematics, University of Ljubljana, Ljubljana, Slovenia.}}
\affil[1,3]{ \small{Department of Mathematics, Simon Fraser University}}
\affil[2]{ \small{Department of Combinatorics and Optimization, University of Waterloo}}
\date{}
\begin{document}

\maketitle

\begin{abstract}
   In 1975 Lov\'{a}sz conjectured that every $r$-partite, $r$-uniform hypergraph contains $r-1$ vertices whose deletion reduces the matching number. If true, this statement would imply a well-known conjecture of Ryser from 1971, which states that every $r$-partite, $r$-uniform hypergraph has a vertex cover of size at most $r-1$ times its matching number.
   When $r=2$, Ryser's conjecture is simply K\H{o}nig's theorem, and the conjecture of Lov\'asz is an immediate corollary. Ryser's conjecture for $r=3$ was proven by Aharoni in 2001, and remains open for all $r\geq 4$.
   
   Here we show that the conjecture of Lov\'asz is false in the case $r=3$. Our counterexample is the line hypergraph of the Biggs-Smith graph, a highly symmetric cubic graph on 102 vertices.
\end{abstract}

\section{Introduction}

An \emph{$r$-partite $r$-uniform hypergraph} $\mathcal{H}$ consists of a \emph{vertex set} $V=V(\mathcal{H})$ that is partitioned into $r$ \emph{parts} $V_1,\ldots,V_r$, and a set $E = E(\mathcal{H})$ of \emph{edges}, where each edge contains exactly one vertex from each part. This definition naturally generalizes the notion of a bipartite graph, which is a 2-partite 2-uniform hypergraph.

A \emph{matching} in a hypergraph is a set of pairwise disjoint edges, and the
\emph{matching number} of $\mathcal{H}$, denoted $\nu(\mathcal{H})$, is the size of a largest matching in $\mathcal{H}$.
A \emph{vertex cover} of $\mathcal{H}$ is a set of vertices that intersects every edge of $\mathcal{H}$, and we denote by $\tau(\mathcal{H})$
the \emph{vertex cover number} of $\mathcal{H}$, which is the size of a smallest vertex cover in $\mathcal{H}$. These definitions imply that for any $r$-uniform hypergraph we have
$$
\nu(\mathcal{H}) \leq \tau(\mathcal{H}) \leq r\nu(\mathcal{H}).
$$
One can easily verify that both inequalities are tight for general hypergraphs.
However, it is natural to consider whether these bounds might be improved for special classes of hypergraphs. 
The following conjecture appears in the PhD thesis of J. R. Henderson from 1971\cite{henderson1971permutation}, where it is attributed to his PhD supervisor Herbert Ryser. 

\begin{conjecture}[Ryser's Conjecture]\label{conj:ryser}
    Every $r$-partite $r$-uniform hypergraph $\mathcal{H}$ satisfies
    $$
    \tau(\mathcal{H}) \leq (r-1)\nu(\mathcal{H}).
    $$
\end{conjecture}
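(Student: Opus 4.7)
The plan is to induct on the matching number $\nu(\mathcal{H})$. The base case $\nu(\mathcal{H}) = 0$ is trivial, and the case $r=2$ for arbitrary $\nu$ is K\"onig's theorem. For the inductive step, the most direct strategy is exactly the vertex deletion principle hinted at in the abstract: find a set $S\subseteq V(\mathcal{H})$ of at most $r-1$ vertices with $\nu(\mathcal{H}\setminus S) = \nu(\mathcal{H})-1$. Then $\mathcal{H}\setminus S$ is again $r$-partite and $r$-uniform, so by induction it admits a cover of size at most $(r-1)(\nu(\mathcal{H})-1)$; combining this cover with $S$ yields a cover of $\mathcal{H}$ of size $(r-1)\nu(\mathcal{H})$. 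Thus the first thing I would try is to prove Lov\'asz's conjecture, perhaps by a minimal counterexample argument: pick a vertex $v$ in some maximum matching and attempt to construct the remaining $r-2$ vertices of $S$ inside the link hypergraph of $v$, which is $(r-1)$-partite and $(r-1)$-uniform.

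The present paper tells us that this direct reduction cannot work already for $r=3$, and presumably also for $r\geq 4$. As a backup, I would turn to the topological method of Aharoni, Berger, and Haxell. The idea is to associate to $\mathcal{H}$ an independence complex on the edge set, bound its topological connectivity in terms of the $r$-partite structure, and then apply a Hall-type theorem that converts high connectivity into the existence of a large matching whenever $\tau(\mathcal{H})$ is large. Aharoni's 2001 theorem for $r=3$ follows this route, using a connectivity estimate for the independence complex of the line graph of an auxiliary bipartite-like structure. For $r\geq 4$ one would need an analogous but quantitatively stronger bound tailored to $r$-partite $r$-uniform hypergraphs, together with a matching Hall-type theorem producing a cover of the required size.

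The main obstacle, in my view, is controlling the connectivity of the relevant complex. The bound required to recover Ryser's conjecture scales roughly like $\tau(\mathcal{H})/(r-1)$, and existing connectivity estimates (Meshulam-type inequalities, nerve theorems, cluster arguments) are exceedingly sensitive to the structure of $\mathcal{H}$ and tend to lose a factor that grows with $r$. Furthermore, the counterexample of this paper suggests that there is no purely local ``witness'' forcing a drop in $\nu$ under a small deletion; hence any successful proof must exploit a genuinely global invariant of $\mathcal{H}$, one robust enough to survive the kind of symmetric obstruction produced by the Biggs--Smith line hypergraph. Identifying this invariant, and converting it into either a combinatorial vertex cover or a topological connectivity bound, is where I expect the proof to stand or fall.
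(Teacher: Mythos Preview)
The statement you are attempting to prove is Ryser's Conjecture, which the paper presents precisely as an open conjecture, not as a theorem. The paper contains no proof of it; indeed, it explicitly states that Ryser's conjecture remains open for all $r\geq 4$, and that the $r=3$ case was settled by Aharoni in 2001. So there is no ``paper's own proof'' to compare against.

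Your proposal is not a proof but a sketch of two strategies. The first strategy---induct on $\nu$ via Lov\'asz's vertex-deletion conjecture---is exactly what the paper refutes: Theorem~\ref{Thm: Lovasz Conj False} exhibits a $3$-partite $3$-uniform hypergraph in which no pair of vertices can be deleted to drop $\nu$, so the inductive step fails already at $r=3$. You recognise this yourself. Your second strategy, the topological route of Aharoni--Haxell, is the one that actually succeeds for $r=3$, but you correctly note that the required connectivity bound for $r\geq 4$ is not known; this is precisely why the conjecture is open. In short, your proposal accurately surveys the landscape but does not---and, by your own assessment, cannot currently---constitute a proof. The honest conclusion is that Conjecture~\ref{conj:ryser} is unresolved, and the paper's contribution is to close off the Lov\'asz reduction as a viable route to it.
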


In 1975 Lov\'{a}sz \cite{lovasz1975minimax} conjectured the stronger statement that
not only do we have a vertex cover of size $(r-1)\nu(\mathcal{H})$, but we can obtain one by repeatedly reducing the matching number with the removal of $r-1$ vertices.

\begin{conjecture}[Lov\'{a}sz 1975]\label{conj:lovasz}
    Every $r$-partite, $r$-uniform hypergraph with at least one edge contains $r-1$ vertices whose deletion reduces the matching number.
\end{conjecture}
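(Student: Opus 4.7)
The plan is to prove the statement by induction on $\nu(\mathcal{H})$. For the base case $\nu(\mathcal{H})=1$, an $(r-1)$-set whose deletion reduces the matching number is exactly a vertex cover of size $r-1$; so the base case is equivalent to Ryser's conjecture for intersecting $r$-partite $r$-uniform families, known to hold at least for $r \le 5$ by topological methods (and for $r=2,3$ by elementary arguments/Aharoni). For the inductive step, given $\mathcal{H}$ with $\nu(\mathcal{H})=k \ge 2$, I want to identify $r-1$ vertices whose removal drops $\nu$ to $k-1$; applying the inductive hypothesis to the resulting hypergraph and iterating would then also recover Ryser's bound.

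The strategy for the inductive step is to fix a maximum matching $M = \{e_1,\dots,e_k\}$ and a distinguished edge $e = \{v_1,\dots,v_r\} \in M$, and to consider the $r$ candidate sets $S_i = V(e)\setminus\{v_i\}$. If some $S_i$ already reduces $\nu$, we are done. Otherwise, for each $i$ there exists a maximum matching $M_i$ of $\mathcal{H}$ disjoint from $S_i$; thus $M_i$ either contains $v_i$ or avoids $V(e)$ altogether. The resulting family $\{M, M_1, \dots, M_r\}$ should be thought of as a collection of near-rainbow matchings, and I would try to apply a Hall-type / defect-style hypergraph theorem (in the spirit of Aharoni's proof for $r=3$) to produce either a matching of size $k+1$ (contradicting maximality of $M$) or to locate a different $(r-1)$-set that succeeds. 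The choice of $e$ is flexible, so a secondary idea is to iterate the construction over edges of $M$, or to replace $e$ with a minimally critical edge (one lying in the fewest maximum matchings) in order to force the desired structural collapse.

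A complementary approach I would pursue in parallel is to exploit the Dulmage--Mendelsohn-style partitioning of the vertex set into vertices that appear in every maximum matching, some maximum matching, and no maximum matching. If the "always-matched" part contains $r-1$ vertices lying in a common edge, we are done trivially. Otherwise, the structural information on how maximum matchings permute the "sometimes-matched" vertices should be combined with the connectivity of the independence complex of the line hypergraph; here I would try to mimic the topological connectivity lower bounds used by Aharoni--Berger--Ziv, upgraded to track not just covers but sequentially deletable vertices.

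The main obstacle is precisely this upgrade from existence of a cover to sequential deletability. In the bipartite case $r=2$, augmenting-path arguments immediately show that some endpoint of a matching edge is removable, but in hypergraphs alternating-path machinery essentially breaks down. Aharoni's topological proof for $r=3$ produces a cover globally and does not obviously yield a local witness of $r-1$ vertices critical for $\nu$. The key technical step I expect to be difficult is ruling out the "pathological" configuration where, for every edge $e$ of every maximum matching and every $(r-1)$-subset of $V(e)$, there is an avoiding maximum matching; making the rainbow-matching or topological argument actually close this case---especially for $r=3$, where even Aharoni's Hall-type theorem seems to give strictly less than what is needed---is where I anticipate the proof to stand or fall, and is the step I would spend the most effort on.
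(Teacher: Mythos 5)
There is a fundamental problem: the statement you are trying to prove is false, and the entire purpose of this paper is to disprove it. The main theorem exhibits a $3$-partite $3$-uniform hypergraph (the line hypergraph of the Biggs--Smith graph, a cubic distance-transitive graph on $102$ vertices) in which deleting \emph{any} pair of vertices leaves the matching number unchanged. So no proof strategy for the conjecture can succeed, at least not for $r=3$.

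It is worth noting that your own analysis locates exactly where your argument would have to fail. You write that the key difficulty is ``ruling out the pathological configuration where, for every edge $e$ of every maximum matching and every $(r-1)$-subset of $V(e)$, there is an avoiding maximum matching.'' That configuration is not pathological in the sense of being impossible --- it is realized. Translated to the line-hypergraph setting, the condition becomes: for every pair of edges $(u,v),(x,y)$ of the Biggs--Smith graph there is a maximum independent set avoiding all four vertices $u,v,x,y$; the paper verifies this (Lemma~\ref{Lemma: Every Distance Can be Avoided}) using distance-transitivity to reduce to a small number of distance-equivalence classes of edge pairs and then exhibiting explicit independent sets of size $43$ for each class. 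Your base case ($\nu=1$, reduction to Ryser for intersecting families) is fine, and your observation that Aharoni's topological machinery produces a cover globally rather than a local witness of $r-1$ critical vertices is accurate --- but the reason no such upgrade exists is that the local witness genuinely need not exist. If you want a salvageable direction, the paper proposes a weakening (Conjecture~\ref{Conjecture: weak version}): some $k(r-1)$ vertices whose deletion reduces $\nu$ by at least $k$, for some $1\le k\le r-1$; your iterative framework might be adapted to attack that instead.
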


Both of these conjectures have proven to be extremely difficult.
When $r=2$, Conjecture~\ref{conj:ryser} is precisely K\H{o}nig's theorem, and Conjecture~\ref{conj:lovasz} is an immediate corollary.
Ryser's conjecture in the case $r=3$ was proven by Aharoni \cite{aharoni2001ryser} in 2001, using a Hall-type theorem for hypergraphs earlier
proven by Aharoni and Haxell \cite{aharoni2000hall}.
More recently, Haxell, Narins, and Szab\'{o} \cite{haxell2018extremal} characterized the $3$-partite $3$-uniform hypergraphs $\mathcal{H}$ with $\tau(\mathcal{H})=2\nu(\mathcal{H})$.

Ryser's conjecture remains open for all $r \geq 4$.
There are a number of partial results, for example
Haxell and Scott \cite{haxell2012ryser} proved that for $r \leq 5$ there exists an $\epsilon>0$ such that $\tau(\mathcal{H}) \leq (r-\epsilon)\nu(\mathcal{H})$ for any $r$-partite $r$-uniform hypergraph.
The conjecture has also been proven for hypergraphs with $\nu(\mathcal{H}) = 1$ when $r\leq 5$ by Tuza \cite{tuza1979some,tuza1983ryser}.
The case $r\geq 6$ is still open even for hypergraphs with $\nu(\mathcal{H}) = 1$.
More recently, Franceti\'c, Herke, McKay, and Wanless \cite{francetic2017ryser} proved Ryser’s Conjecture for linear intersecting hypergraphs (i.e. hypergraphs where any two edges intersect in exactly one vertex) when $r\leq 9$.
These results were later expanded to $t$-intersecting hypergraphs (i.e. hypergraphs where any two edges intersect in at least $t$ vertices) by Bustamante and Stein \cite{bustamante2018monochromatic} and independently by Kir\'{a}ly and T\'{o}thm\'{e}r\'{e}sz \cite{francetic2017ryser}.
The most recent progress on the problem for $t$-intersecting hypergraphs is by Bishnoi, Das, Morris, and Szab\'{o} \cite{bishnoi2021ryser}, who obtained tight bounds on the vertex cover number of some classes of $r$-uniform $t$-intersecting hypergraphs.
Fractional versions of the conjecture have also been studied, in particular it was shown by F\H{u}redi \cite{furedi1981maximum} that $\tau^* \leq (r-1)\nu$, and shown by Lov\'{a}sz \cite{lovasz1974minimax} that $\tau\leq \frac{r}{2}\nu^*$, where $\tau^*$ and $\nu^*$ are the fractional vertex cover and matching numbers, respectively.

The original source~\cite{lovasz1975minimax} for Conjecture~\ref{conj:lovasz} is in Hungarian, but the conjecture has been widely known since its appearance in the influential 1988 survey of F{\"u}redi~\cite{furedi1988matchings} on matchings and covers in hypergraphs. However, not much work has been published on the problem over the years.
To the authors' knowledge, the most significant progress comes from work by Haxell, Narins, and Szab\'{o} \cite{haxell2018extremal}, who showed every $3$-partite $3$-uniform hypergraph $\mathcal{H}$ with $\tau(\mathcal{H})=2\nu(\mathcal{H})$ satisfies Conjecture~\ref{conj:lovasz}.

The primary contribution of this paper is to prove that Conjecture~\ref{conj:lovasz} is false.

\begin{theorem}\label{Thm: Lovasz Conj False}
    There exists a $3$-partite, $3$-uniform hypergraph such that deleting any pair of vertices does not reduce the matching number.
\end{theorem}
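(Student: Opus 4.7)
The plan is to define $\mathcal{H}$ as the \emph{line hypergraph} of the Biggs-Smith graph $G$: take $V(\mathcal{H}) = E(G)$, and for each vertex $v \in V(G)$ of this cubic graph introduce a hyperedge consisting of the three edges of $G$ incident with $v$. To see that $\mathcal{H}$ is $3$-partite, I would partition $V(\mathcal{H}) = E(G)$ according to a proper $3$-edge-colouring of $G$: the Biggs-Smith graph is a bridgeless cubic graph that is class $1$, so such a colouring exists and gives the three parts, each of which is hit exactly once by every hyperedge.

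The next step is to translate the matching and vertex-deletion conditions of $\mathcal{H}$ into graph-theoretic statements about $G$. Two hyperedges are disjoint exactly when the two corresponding vertices of $G$ share no edge, i.e.\ are non-adjacent, so matchings in $\mathcal{H}$ correspond bijectively to independent sets in $G$, giving $\nu(\mathcal{H}) = \alpha(G)$. Deleting a pair of vertices $\{e,f\} \subseteq V(\mathcal{H}) = E(G)$ from $\mathcal{H}$ removes exactly the hyperedges associated with the (at most four) endpoints of $e$ and $f$ in $G$. Therefore it suffices to prove:
\begin{quote}
\emph{For every pair of edges $e, f \in E(G)$, there exists a maximum independent set $I$ of $G$ such that $I$ is disjoint from the set of endpoints of $e$ and $f$.}
\end{quote}

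To prove this, I would exploit the very rich symmetry of the Biggs-Smith graph, which is distance-transitive (and in fact $4$-arc-transitive) on $102$ vertices. Edge-transitivity lets me fix $e$ up to automorphism. Then the stabilizer of $e$ acts on the remaining edges of $G$ with a small number of orbits, indexed essentially by the distance from $e$ to $f$ together with a finer invariant recording how the two edges sit relative to one another. This reduces the infinite-looking requirement to finitely many (and in practice few) explicit cases. For each orbit representative of the pair $(e,f)$ I would produce an independent set of $G$ of size $\alpha(G)$ avoiding the four forbidden vertices. Candidates for such independent sets can be built from one or two fixed maximum independent sets $I_0$ and then perturbed by swaps along short cycles or along the automorphism orbits when $I_0$ happens to meet $\{e \cup f\}$.

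The main obstacle is this final step: I must verify that \emph{every} orbit of edge-pairs admits a maximum independent set avoiding the specified vertices, and the maximum independent sets of the Biggs-Smith graph do not admit an obviously clean algebraic description. I expect that a structural argument alone is unlikely to suffice, and that the verification will have to be done with computer assistance: enumerate (or sample) maximum independent sets of $G$ using its automorphism group to reduce the search space, and then for each of the finitely many orbit representatives of $\{e,f\}$ exhibit a witnessing independent set. Showing that $\alpha(G)$ is actually attained after each forbidden 4-tuple is removed (rather than dropping by $1$) is the quantitative heart of the argument; any of the small orbits failing would kill the counterexample, so the symmetry analysis has to be done carefully.
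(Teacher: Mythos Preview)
Your plan is essentially the paper's approach: construct $\mathcal{H}$ as the line hypergraph of the Biggs--Smith graph, use a proper $3$-edge-colouring to get the tripartition, translate to the statement that every two edges of $\BS$ can be avoided by some maximum independent set, and then reduce via symmetry to a finite list of edge-pair types.

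The one point worth flagging is your expectation that ``a structural argument alone is unlikely to suffice'' and that computer verification will be needed. The paper avoids this. The key sharpening is that in $\BS$ the orbit of an ordered edge pair $\bigl((u,v),(x,y)\bigr)$ under $\mathrm{Aut}(\BS)$ is determined \emph{completely} by the four pairwise distances $\dist(u,x),\dist(u,y),\dist(v,x),\dist(v,y)$; no ``finer invariant'' is required. This is proved directly from distance-transitivity together with the intersection array (specifically, that $c_i=1$ for $i<7$ forces unique geodesics and tightly constrains neighbours across level sets). With this lemma in hand, the case analysis collapses to a short list organised by $d=\min\dist$ and the pattern of the other three distances, and for each case a single explicit maximum independent set (a small modification of one fixed $43$-set $I_1$) is exhibited by hand. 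So your outline is correct, but the orbit classification is cleaner than you anticipate, and the final verification does not need a computer.
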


For a graph $G = (V,E)$, the \emph{line hypergraph} $\mathcal{L}(G)$ of $G$ is the hypergraph $\mathcal{L} = (V',E')$ where $V' = E$ and $e_v = \{e_1,\dots, e_k\} \in E'$ if and only if $\{e_1,\dots, e_k\}$ is the set of all edges incident to a vertex $v \in V$. Then $\nu(\mathcal{L}(G))=\alpha(G)$, where $\alpha(G)$ denotes the \emph{independence number} of $G$, i.e. the size of a largest independent set in $G$.
We will prove that the line hypergraph of the graph shown in Figure~\ref{Fig:Basic Biggs-Smith} is a counterexample to Conjecture~\ref{conj:lovasz}.
This graph is a cubic graph with $102$ vertices and is known as the Biggs-Smith graph. A precise description of the graph is given in Section~\ref{sec:BSG}, where we also detail its many special properties that will be needed for our argument.  
Principal among these is the fact that it is distance-transitive (see Section~\ref{sec:BSG}), which was shown by Biggs and Smith in Section~1 and Section~4 of \cite{biggs1971trivalent}.
In fact, as proved in~\cite{biggs1971trivalent}, only $12$ cubic distance-transitive graphs exist (up to isomorphism),
and this graph was first discovered by Biggs and Smith as part of the classification of cubic distance-transitive graphs.

\begin{figure}[htb]
\begin{center}  
\scalebox{0.235}{
\begin{tikzpicture}
\GraphInit[vstyle=Simple]
\Vertex[L=\hbox{$0$},x=5.7586cm,y=6.1162cm]{v0}
\Vertex[L=\hbox{$1$},x=5.177cm,y=7.5245cm]{v1}
\Vertex[L=\hbox{$2$},x=4.2953cm,y=8.5227cm]{v2}
\Vertex[L=\hbox{$3$},x=3.2326cm,y=8.976cm]{v3}
\Vertex[L=\hbox{$4$},x=2.1325cm,y=8.8231cm]{v4}
\Vertex[L=\hbox{$5$},x=1.1434cm,y=8.0848cm]{v5}
\Vertex[L=\hbox{$6$},x=0.3991cm,y=6.8607cm]{v6}
\Vertex[L=\hbox{$7$},x=0.0cm,y=5.3162cm]{v7}
\Vertex[L=\hbox{$8$},x=0.0cm,y=3.6598cm]{v8}
\Vertex[L=\hbox{$9$},x=0.3991cm,y=2.1152cm]{v9}
\Vertex[L=\hbox{$10$},x=1.1434cm,y=0.8912cm]{v10}
\Vertex[L=\hbox{$11$},x=2.1325cm,y=0.1528cm]{v11}
\Vertex[L=\hbox{$12$},x=3.2326cm,y=0.0cm]{v12}
\Vertex[L=\hbox{$13$},x=4.2953cm,y=0.4533cm]{v13}
\Vertex[L=\hbox{$14$},x=5.177cm,y=1.4515cm]{v14}
\Vertex[L=\hbox{$15$},x=5.7586cm,y=2.8598cm]{v15}
\Vertex[L=\hbox{$16$},x=5.9616cm,y=4.488cm]{v16}
\Vertex[L=\hbox{$17$},x=9.9701cm,y=12.0cm]{v17}
\Vertex[L=\hbox{$18$},x=5.9616cm,y=19.512cm]{v18}
\Vertex[L=\hbox{$19$},x=3.2326cm,y=24.0cm]{v19}
\Vertex[L=\hbox{$20$},x=8.1508cm,y=14.992cm]{v20}
\Vertex[L=\hbox{$21$},x=16.1679cm,y=14.992cm]{v21}
\Vertex[L=\hbox{$22$},x=21.271cm,y=8.976cm]{v22}
\Vertex[L=\hbox{$23$},x=23.2154cm,y=7.5245cm]{v23}
\Vertex[L=\hbox{$24$},x=17.4641cm,y=14.0243cm]{v24}
\Vertex[L=\hbox{$25$},x=9.447cm,y=14.0243cm]{v25}
\Vertex[L=\hbox{$26$},x=5.177cm,y=22.5485cm]{v26}
\Vertex[L=\hbox{$27$},x=5.177cm,y=16.4755cm]{v27}
\Vertex[L=\hbox{$28$},x=9.447cm,y=9.9757cm]{v28}
\Vertex[L=\hbox{$29$},x=17.4641cm,y=9.9757cm]{v29}
\Vertex[L=\hbox{$30$},x=23.2154cm,y=16.4755cm]{v30}
\Vertex[L=\hbox{$31$},x=19.1818cm,y=23.1088cm]{v31}
\Vertex[L=\hbox{$32$},x=22.3337cm,y=15.4773cm]{v32}
\Vertex[L=\hbox{$33$},x=16.8763cm,y=9.3102cm]{v33}
\Vertex[L=\hbox{$34$},x=8.8593cm,y=9.3102cm]{v34}
\Vertex[L=\hbox{$35$},x=4.2953cm,y=15.4773cm]{v35}
\Vertex[L=\hbox{$36$},x=5.7586cm,y=21.1402cm]{v36}
\Vertex[L=\hbox{$37$},x=2.1325cm,y=23.8472cm]{v37}
\Vertex[L=\hbox{$38$},x=7.4174cm,y=14.8901cm]{v38}
\Vertex[L=\hbox{$39$},x=15.4344cm,y=14.8901cm]{v39}
\Vertex[L=\hbox{$40$},x=20.1709cm,y=23.8472cm]{v40}
\Vertex[L=\hbox{$41$},x=21.271cm,y=15.024cm]{v41}
\Vertex[L=\hbox{$42$},x=21.271cm,y=24.0cm]{v42}
\Vertex[L=\hbox{$43$},x=20.1709cm,y=15.1769cm]{v43}
\Vertex[L=\hbox{$44$},x=15.4344cm,y=9.1099cm]{v44}
\Vertex[L=\hbox{$45$},x=7.4174cm,y=9.1099cm]{v45}
\Vertex[L=\hbox{$46$},x=2.1325cm,y=15.1769cm]{v46}
\Vertex[L=\hbox{$47$},x=0.0cm,y=20.3402cm]{v47}
\Vertex[L=\hbox{$48$},x=5.9957cm,y=12.5521cm]{v48}
\Vertex[L=\hbox{$49$},x=14.0128cm,y=12.5521cm]{v49}
\Vertex[L=\hbox{$50$},x=18.0384cm,y=5.3162cm]{v50}
\Vertex[L=\hbox{$51$},x=19.1818cm,y=8.0848cm]{v51}
\Vertex[L=\hbox{$52$},x=14.7751cm,y=14.3979cm]{v52}
\Vertex[L=\hbox{$53$},x=6.758cm,y=14.3979cm]{v53}
\Vertex[L=\hbox{$54$},x=1.1434cm,y=23.1088cm]{v54}
\Vertex[L=\hbox{$55$},x=0.3991cm,y=17.1393cm]{v55}
\Vertex[L=\hbox{$56$},x=6.2618cm,y=10.4182cm]{v56}
\Vertex[L=\hbox{$57$},x=14.2789cm,y=10.4182cm]{v57}
\Vertex[L=\hbox{$58$},x=18.4375cm,y=2.1152cm]{v58}
\Vertex[L=\hbox{$59$},x=20.1709cm,y=0.1528cm]{v59}
\Vertex[L=\hbox{$60$},x=22.3337cm,y=0.4533cm]{v60}
\Vertex[L=\hbox{$61$},x=23.797cm,y=2.8598cm]{v61}
\Vertex[L=\hbox{$62$},x=17.8519cm,y=10.9145cm]{v62}
\Vertex[L=\hbox{$63$},x=9.8348cm,y=10.9145cm]{v63}
\Vertex[L=\hbox{$64$},x=5.7586cm,y=17.8838cm]{v64}
\Vertex[L=\hbox{$65$},x=4.2953cm,y=23.5467cm]{v65}
\Vertex[L=\hbox{$66$},x=8.8593cm,y=14.6898cm]{v66}
\Vertex[L=\hbox{$67$},x=16.8763cm,y=14.6898cm]{v67}
\Vertex[L=\hbox{$68$},x=22.3337cm,y=23.5467cm]{v68}
\Vertex[L=\hbox{$69$},x=19.1818cm,y=15.9152cm]{v69}
\Vertex[L=\hbox{$70$},x=23.2154cm,y=22.5485cm]{v70}
\Vertex[L=\hbox{$71$},x=18.4375cm,y=17.1393cm]{v71}
\Vertex[L=\hbox{$72$},x=23.797cm,y=21.1402cm]{v72}
\Vertex[L=\hbox{$73$},x=18.0384cm,y=18.6838cm]{v73}
\Vertex[L=\hbox{$74$},x=14.0128cm,y=11.4479cm]{v74}
\Vertex[L=\hbox{$75$},x=5.9957cm,y=11.4479cm]{v75}
\Vertex[L=\hbox{$76$},x=0.0cm,y=18.6838cm]{v76}
\Vertex[L=\hbox{$77$},x=3.2326cm,y=15.024cm]{v77}
\Vertex[L=\hbox{$78$},x=8.1508cm,y=9.008cm]{v78}
\Vertex[L=\hbox{$79$},x=16.1679cm,y=9.008cm]{v79}
\Vertex[L=\hbox{$80$},x=21.271cm,y=0.0cm]{v80}
\Vertex[L=\hbox{$81$},x=23.2154cm,y=1.4515cm]{v81}
\Vertex[L=\hbox{$82$},x=24.0cm,y=4.488cm]{v82}
\Vertex[L=\hbox{$83$},x=17.9872cm,y=12.0cm]{v83}
\Vertex[L=\hbox{$84$},x=24.0cm,y=19.512cm]{v84}
\Vertex[L=\hbox{$85$},x=18.0384cm,y=20.3402cm]{v85}
\Vertex[L=\hbox{$86$},x=23.797cm,y=17.8838cm]{v86}
\Vertex[L=\hbox{$87$},x=18.4375cm,y=21.8848cm]{v87}
\Vertex[L=\hbox{$88$},x=14.2789cm,y=13.5818cm]{v88}
\Vertex[L=\hbox{$89$},x=6.2618cm,y=13.5818cm]{v89}
\Vertex[L=\hbox{$90$},x=0.3991cm,y=21.8848cm]{v90}
\Vertex[L=\hbox{$91$},x=1.1434cm,y=15.9152cm]{v91}
\Vertex[L=\hbox{$92$},x=6.758cm,y=9.6021cm]{v92}
\Vertex[L=\hbox{$93$},x=14.7751cm,y=9.6021cm]{v93}
\Vertex[L=\hbox{$94$},x=19.1818cm,y=0.8912cm]{v94}
\Vertex[L=\hbox{$95$},x=18.0384cm,y=3.6598cm]{v95}
\Vertex[L=\hbox{$96$},x=18.4375cm,y=6.8607cm]{v96}
\Vertex[L=\hbox{$97$},x=20.1709cm,y=8.8231cm]{v97}
\Vertex[L=\hbox{$98$},x=22.3337cm,y=8.5227cm]{v98}
\Vertex[L=\hbox{$99$},x=23.797cm,y=6.1162cm]{v99}
\Vertex[L=\hbox{$100$},x=17.8519cm,y=13.0855cm]{v100}
\Vertex[L=\hbox{$101$},x=9.8348cm,y=13.0855cm]{v101}
\Edge[](v0)(v16)
\Edge[](v0)(v1)
\Edge[](v0)(v101)
\Edge[](v1)(v2)
\Edge[](v1)(v25)
\Edge[](v2)(v66)
\Edge[](v2)(v3)
\Edge[](v3)(v4)
\Edge[](v3)(v20)
\Edge[](v4)(v5)
\Edge[](v4)(v38)
\Edge[](v5)(v53)
\Edge[](v5)(v6)
\Edge[](v6)(v7)
\Edge[](v6)(v89)
\Edge[](v7)(v48)
\Edge[](v7)(v8)
\Edge[](v8)(v9)
\Edge[](v8)(v75)
\Edge[](v9)(v56)
\Edge[](v9)(v10)
\Edge[](v10)(v11)
\Edge[](v10)(v92)
\Edge[](v11)(v12)
\Edge[](v11)(v45)
\Edge[](v12)(v13)
\Edge[](v12)(v78)
\Edge[](v13)(v34)
\Edge[](v13)(v14)
\Edge[](v14)(v28)
\Edge[](v14)(v15)
\Edge[](v15)(v16)
\Edge[](v15)(v63)
\Edge[](v16)(v17)
\Edge[](v17)(v18)
\Edge[](v17)(v83)
\Edge[](v18)(v19)
\Edge[](v18)(v77)
\Edge[](v19)(v20)
\Edge[](v19)(v47)
\Edge[](v20)(v21)
\Edge[](v21)(v22)
\Edge[](v21)(v42)
\Edge[](v22)(v51)
\Edge[](v22)(v23)
\Edge[](v23)(v82)
\Edge[](v23)(v24)
\Edge[](v24)(v70)
\Edge[](v24)(v25)
\Edge[](v25)(v26)
\Edge[](v26)(v54)
\Edge[](v26)(v27)
\Edge[](v27)(v91)
\Edge[](v27)(v28)
\Edge[](v28)(v29)
\Edge[](v29)(v81)
\Edge[](v29)(v30)
\Edge[](v30)(v87)
\Edge[](v30)(v31)
\Edge[](v31)(v32)
\Edge[](v31)(v52)
\Edge[](v32)(v33)
\Edge[](v32)(v40)
\Edge[](v33)(v34)
\Edge[](v33)(v60)
\Edge[](v34)(v35)
\Edge[](v35)(v36)
\Edge[](v35)(v55)
\Edge[](v36)(v37)
\Edge[](v36)(v101)
\Edge[](v37)(v38)
\Edge[](v37)(v76)
\Edge[](v38)(v39)
\Edge[](v39)(v97)
\Edge[](v39)(v40)
\Edge[](v40)(v41)
\Edge[](v41)(v42)
\Edge[](v41)(v79)
\Edge[](v42)(v43)
\Edge[](v43)(v68)
\Edge[](v43)(v44)
\Edge[](v44)(v59)
\Edge[](v44)(v45)
\Edge[](v45)(v46)
\Edge[](v46)(v64)
\Edge[](v46)(v47)
\Edge[](v47)(v48)
\Edge[](v48)(v49)
\Edge[](v49)(v50)
\Edge[](v49)(v85)
\Edge[](v50)(v51)
\Edge[](v50)(v58)
\Edge[](v51)(v52)
\Edge[](v52)(v53)
\Edge[](v53)(v54)
\Edge[](v54)(v55)
\Edge[](v55)(v56)
\Edge[](v56)(v57)
\Edge[](v57)(v71)
\Edge[](v57)(v58)
\Edge[](v58)(v59)
\Edge[](v59)(v60)
\Edge[](v60)(v61)
\Edge[](v61)(v99)
\Edge[](v61)(v62)
\Edge[](v62)(v86)
\Edge[](v62)(v63)
\Edge[](v63)(v64)
\Edge[](v64)(v65)
\Edge[](v65)(v66)
\Edge[](v65)(v90)
\Edge[](v66)(v67)
\Edge[](v67)(v98)
\Edge[](v67)(v68)
\Edge[](v68)(v69)
\Edge[](v69)(v70)
\Edge[](v69)(v93)
\Edge[](v70)(v71)
\Edge[](v71)(v72)
\Edge[](v72)(v100)
\Edge[](v72)(v73)
\Edge[](v73)(v84)
\Edge[](v73)(v74)
\Edge[](v74)(v75)
\Edge[](v74)(v95)
\Edge[](v75)(v76)
\Edge[](v76)(v77)
\Edge[](v77)(v78)
\Edge[](v78)(v79)
\Edge[](v79)(v80)
\Edge[](v80)(v81)
\Edge[](v80)(v94)
\Edge[](v81)(v82)
\Edge[](v82)(v83)
\Edge[](v83)(v84)
\Edge[](v84)(v85)
\Edge[](v85)(v86)
\Edge[](v86)(v87)
\Edge[](v87)(v88)
\Edge[](v88)(v96)
\Edge[](v88)(v89)
\Edge[](v89)(v90)
\Edge[](v90)(v91)
\Edge[](v91)(v92)
\Edge[](v92)(v93)
\Edge[](v93)(v94)
\Edge[](v94)(v95)
\Edge[](v95)(v96)
\Edge[](v96)(v97)
\Edge[](v97)(v98)
\Edge[](v98)(v99)
\Edge[](v99)(v100)
\Edge[](v100)(v101)
\end{tikzpicture}
}
\qquad \qquad 
\scalebox{1.35}{
\includegraphics[width=0.33\textwidth]{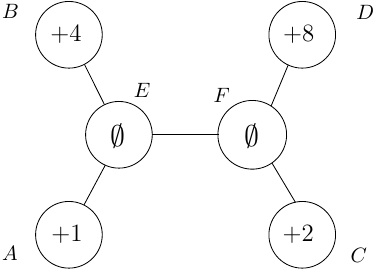}
}
\end{center}
\caption{A drawing of the Biggs-Smith graph and its representation as an order-17 expansion of the $H$-graph (as defined in Biggs \cite{Biggs_1973,Biggs_AGT}).} 
\label{Fig:Basic Biggs-Smith}
\end{figure}

This paper is structured as follows.
In Section~\ref{sec:BSG} we define the Biggs-Smith graph and establish several of its properties, including the structure of its level sets and a strong property of its automorphism group. We exhibit several different drawings to emphasize the various aspects of its structure that will be relevant for our arguments.
In Section~\ref{sec:mainpf} we prove a key lemma that implies Theorem~\ref{Thm: Lovasz Conj False} by investigating certain special maximum independent sets in the Biggs-Smith graph.
We conclude with a section about future work and a summary of our computational efforts to find other counterexamples to the Lov\'{a}sz
conjecture.

\section{Definition and Properties of the Biggs-Smith Graph}\label{sec:BSG}

We begin with the precise definition of the Biggs-Smith graph, which from now on we will denote by $\BS$. The vertex set consists of $102=17\cdot 6$ vertices, which we label in 17 sets of 6 as $\{1a,1b,1c,1d,1e,1f\}$ up to $\{17a,17b,17c,17d,17e,17f\}$. Each set $H_i=\{ia,ib,ic,id,ie,if\}$ induces a double star in $\BS$ with centers $ie$ and $if$ of degree three each, and with leaves $ia,ib$ adjacent to $ie$ and $ic,id$ adjacent to $if$ (as shown in Figure~\ref{fig:I1}).
The 17 vertices $\{1a,\ldots,17a\}$ induce a 17-cycle, called the \emph{$a$-cycle}, in their natural order, i.e. $ia$ is adjacent to $(i+1)a$ for each $i$ and $17a$ is adjacent to $1a$.  The 17 vertices $\{1b,\ldots,17b\}$ induce the $b$-cycle, a 17-cycle with \emph{step offset} 4, which means $ib$ is adjacent to $(i+4)b$ for each $i$ (reducing the sum $i+4$ modulo 17). Similarly, the $c$-cycle is induced by the 17 vertices $\{1c,\ldots,17c\}$ with step offset 2, and the $d$-cycle by $\{1d,\ldots,17d\}$ with step offset 8 (see Figure~\ref{Fig:Basic Biggs-Smith}). Note then that the resulting graph $\BS$ is cubic (i.e. every vertex has degree 3).
As such $\BS$ is sometimes described as an order-$17$ graph expansion of the $H$-graph (the double star induced by each set $H_i=\{ia,ib,ic,id,ie,if\}$) with step offsets $1, 2, 4$, and $8$. This means taking 17 disjoint copies of $H$ and joining the leaves into four 17-cycles with the prescribed offsets (for more on this see \cite{Biggs_1973}). The drawing on the right in Figure~\ref{Fig:Basic Biggs-Smith} shows the graph with this representation.

\begin{figure}[ht!]
\begin{center}
    \includegraphics[width=0.5\textwidth]{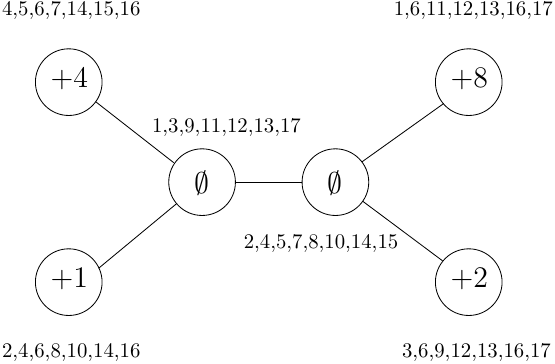}
\end{center}
\caption{A maximum independent set we call $I_1$ in $\BS$ is given.
Note that $I_1$ has size 43.}
\label{fig:double}
\end{figure}

Our next aim in this section is to collect some of the known properties of $\BS$ that will be important for our arguments. 
Notably, the automorphism group $\BS$ is isomorphic to $PSL(2,17)$ per \cite{Biggs_1973}.
We will not require this property specifically, 
however it underlies many of the structural features of $\BS$ we consider.
As mentioned in the introduction, Biggs and Smith~\cite{biggs1971trivalent} discovered $\BS$ in their study of distance-transitive cubic graphs, a property which we now describe.

Let $G = (V,E)$ be a graph.
An \emph{automorphism} of $G$ is a bijection $\phi:V \rightarrow V$ satisfying $\{u,v\} \in E$ if and only if $\{\phi(u),\phi(v)\}\in E$.
A \emph{geodesic} from a vertex $u$ to a vertex $v$ is a shortest path from $u$ to $v$. When we wish to emphasize the endpoints, we write $(u,v)$-\emph{geodesic}. The \emph{distance} $\dist(u,v)$ between $u$ and $v$ in $G$ is the length of a $(u,v)$-geodesic. We write $D_i(u)=\{x:\dist(u,x)=i\}$.
A graph $G$ is \emph{distance-transitive} if for all pairs $u,v \in V$ and $x,y \in V$ such that $\dist(u,v) = \dist(x,y)$, there exists an automorphism $\phi$ so that $\phi(u) = x$ and $\phi(v) = y$.

To say that a graph $G$ with diameter $k$ has \emph{intersection array} $(b_0,\ldots,b_{k-1};c_1,\ldots,c_k)$ means that for every vertex $v$ and each $i$ with $0\leq i\leq k$, every vertex in the set $D_i(v)$ has exactly $b_i$ neighbours in $D_{i+1}(u)$ and exactly $c_i$ neighbours in $D_{i-1}(u)$. (A graph is \emph{distance-regular} if it has a well-defined intersection array, a property that is immediately implied by distance-transitivity, see, e.g. \cite{brouwer1989distance}.)

\begin{theorem}\label{thm:BSprops} The Biggs-Smith graph $\BS$ has the following properties.
  \begin{enumerate}
  \item The graph $\BS$ is distance-transitive~\cite{biggs1971trivalent}.
  \item Its independence number $\alpha(\BS)$ is 43 (see e.g.~\cite{conder2025edge}).
  \item It has diameter 7 (see~\cite{biggs1971trivalent}), girth 9 (see~\cite{conder2025edge}) and intersection array \newline $(3,2,2,2,1,1,1;1,1,1,1,1,1,3)$ (see~\cite{brouwer1989distance}).
  \item It has a Hamilton cycle (see \cite{Biggs_1973}).
  \end{enumerate}
\end{theorem}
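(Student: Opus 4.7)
The plan is to verify the four properties in turn, treating them as essentially independent tasks. For item (1), I would follow \cite{biggs1971trivalent} and work directly with the automorphism group. The order-$17$ $H$-graph expansion already exhibits a $\mathbb{Z}/17\mathbb{Z}$-action shifting the index $i$, and because the step offsets $1,2,4,8$ are the successive powers of $2$ modulo $17$, the multiplicative group $(\mathbb{Z}/17\mathbb{Z})^\times$ provides additional automorphisms that permute the four cycles consistently with the $H$-graph structure. Combining these with the local symmetries of the $H$-graph yields the action of $PSL(2,17)$ on the $102$ vertices; distance-transitivity then reduces to a finite coset-level check that each diagonal double coset is a single orbit.

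Given item (1), items (3) and (4) mostly take care of themselves. Distance-regularity is automatic from distance-transitivity, so the intersection array is obtained from a single breadth-first search from any fixed vertex $v$: one checks that the level sets $D_i(v)$ have sizes $1,3,6,12,24,24,24,8$ (summing to $102$), the diameter is therefore $7$, and the values $b_i$ and $c_i$ are read off the BFS tree, giving $(3,2,2,2,1,1,1;1,1,1,1,1,1,3)$. The residual intra-level degrees $a_i = 3 - b_i - c_i$ work out to $(0,0,0,0,1,1,1,0)$, so the first intra-level edge appears at distance $4$ from $v$; together with $c_i = 1$ for $i \leq 6$ this forces the girth to be $2 \cdot 4 + 1 = 9$. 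For item (4) a Hamilton cycle can be exhibited explicitly by tracing a path through the $H$-graph expansion, as done in \cite{Biggs_1973}.

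The main obstacle is item (2), the claim $\alpha(\BS) = 43$. The lower bound $\alpha(\BS) \geq 43$ is witnessed by the independent set $I_1$ in Figure~\ref{fig:double}, which is verified by inspection. For the upper bound $\alpha(\BS) \leq 43$, my first attempt would be the Hoffman ratio bound using the adjacency spectrum of $\BS$, which is itself determined by the intersection array from item (3) via the standard theory of distance-regular graphs; if Hoffman is tight at $43$, we are done. If not, refined tools such as the Lov\'asz theta function, a linear programming bound exploiting distance-transitivity, or a direct combinatorial case analysis leveraging the girth-$9$ and cubic structure would be needed to close the gap. The cleanest path, and the one implicit in the statement, is to cite the computational verification in \cite{conder2025edge}. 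This upper bound is where I would expect to do the real technical work, whereas items (1), (3), and (4) either come directly from the cited literature or reduce to a mechanical computation once item (1) is in hand.
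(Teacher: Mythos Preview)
Your treatment of items (1), (3), and (4) is reasonable and in fact goes beyond the paper: the paper simply cites the literature for all four items and does not reprove them. Your BFS/intersection-array argument for (3), including the girth computation from $a_i=0$ for $i\le 3$ and $c_i=1$ for $i\le 4$, is correct.

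Where your proposal diverges meaningfully is item (2). Your primary line of attack, the Hoffman ratio bound, cannot succeed: equality at $43$ would require the least eigenvalue to satisfy $-\lambda_{\min}/(3-\lambda_{\min})=43/102$, i.e.\ $\lambda_{\min}=-129/59$, a non-integer rational and hence not an algebraic integer, so it is not an eigenvalue of the adjacency matrix. Thus Hoffman is strictly weaker than $43$ here, and you would be forced onto your fallback options.

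The paper does supply an independent proof of $\alpha(\BS)=43$ in its appendix, and it is purely combinatorial rather than spectral. The argument works in the $H$-representation: writing $J_a,J_b,\dots,J_f\subseteq[17]$ for the indices $i$ with $ia,ib,\dots,if$ in a putative $44$-set $I$, one gets immediate constraints such as $J_e\cap J_f=\emptyset$, $j_x\le 8$ for $x\in\{a,b,c,d\}$, and $j_a+j_b+j_e\le 17+|J_a\cap J_b|$. The heart of the proof is a case analysis on $j_0:=j_a+j_b+j_e$ and $j_1:=j_c+j_d+j_f$ (which must both lie in $\{21,22,23\}$), classifying the possible shapes of $J_a\cap J_b$ up to the symmetries of Proposition~\ref{prop:symmetries} and showing in each case that $J_e$ and $J_f$ cannot be made disjoint. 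Only the final compatibility check (that every candidate $J_e$ meets every candidate $J_f$ under all relevant symmetries) is delegated to a short computer verification. This is exactly the ``direct combinatorial case analysis leveraging the cubic structure'' you list as a fallback, but carried out via the $H$-expansion rather than via girth or level-set arguments.
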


All of these statements are established in the literature as indicated. An example of a maximum independent set in $\BS$ is shown in 
Figure~\ref{fig:double}, and a different drawing showing the same set is given in Figure~\ref{fig:I1}.
See Figure~\ref{fig:vertex-levels} for a drawing of $\BS$ that exhibits the properties in (3).
To the authors' knowledge, (2) has only been shown using computer assistance.
For completeness we provide a proof that $\alpha(\BS) = 43$, see the appendix, that uses a computer only for a very simple task.

Since $\BS$ is cubic, Theorem~\ref{thm:BSprops}(4) immediately implies that $\BS$ is 3-edge-colourable, and hence its line hypergraph ${\mathcal L}(\BS)$ is tripartite. Therefore, to prove Theorem~\ref{Thm: Lovasz Conj False} it suffices to prove the following statement.

\begin{lemma}\label{Lemma: Every Distance Can be Avoided}
   For every $\{u,v\},\{x,y\}\in E(\BS)$, there exists a maximum independent set $I\subseteq V(\BS)$ such that
   $$
   I\cap \{u,v,x,y\} = \emptyset.
   $$
\end{lemma}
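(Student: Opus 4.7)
The plan is to exploit the high symmetry of $\BS$ in order to reduce the statement to a finite case check. By Theorem~\ref{thm:BSprops}(1), $\BS$ is distance-transitive, hence in particular arc-transitive, so by applying a suitable automorphism we may assume that $\{u,v\}$ equals a single chosen edge $e_0$ of our choice. The stabilizer of $e_0$ in $\mathrm{Aut}(\BS)\cong PSL(2,17)$ then acts on the remaining $152$ edges of $\BS$, partitioning them into a finite (and small) collection of orbits $O_1,\dots,O_k$. It will then suffice to verify the conclusion for $e_0$ together with a single representative edge from each $O_i$. These orbits can be listed concretely: they are refinements of the partition of edges $\{x,y\}$ according to the multiset of distances $\{\dist(u,x),\dist(u,y),\dist(v,x),\dist(v,y)\}$, and the intersection array $(3,2,2,2,1,1,1;1,1,1,1,1,1,3)$ from Theorem~\ref{thm:BSprops}(3), together with diameter $7$, limits the possibilities to a manageable list.

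For each orbit representative $\{x,y\}$, I would look for an avoiding maximum independent set inside the automorphism-orbit $\mathcal{O}(I_1):=\mathrm{Aut}(\BS)\cdot I_1$ of the explicit set $I_1$ pictured in Figure~\ref{fig:double}. Since $\BS$ is vertex-transitive, every vertex lies in exactly a $43/102$ fraction of the sets in $\mathcal{O}(I_1)$. Applying an inclusion--exclusion bound on the action of $\mathrm{Aut}(\BS)$ on pairs and $4$-tuples of vertices gives a lower bound on the number of sets in $\mathcal{O}(I_1)$ missing all of $u,v,x,y$; in most orbits this bound is strictly positive, producing the required $I$ immediately. For any residual orbits not covered by $\mathcal{O}(I_1)$ alone, the natural remedies are either (i) to introduce a second orbit of maximum independent sets, built by hand from the structural drawings in Section~\ref{sec:BSG} (for instance from the $H$-graph expansion of Figure~\ref{Fig:Basic Biggs-Smith}, where each double star $H_i$ contributes a controlled number of vertices to $I$), or (ii) to perform a local swap on a near-miss: given an automorphism-image of $I_1$ that meets $\{u,v,x,y\}$ in a single vertex $w$, use the girth $9$ condition from Theorem~\ref{thm:BSprops}(3) to exchange $w$ for a nearby vertex while preserving independence and cardinality $43$.

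The principal obstacle will be the tight configurations in which $\{u,v\}$ and $\{x,y\}$ are structurally entangled, typically when the two edges are close in $\BS$ or when their four endpoints jointly span a symmetric substructure respected by a large subgroup of $\mathrm{Aut}(\BS)$. In such cases the counting lower bound from $\mathcal{O}(I_1)$ may collapse, forcing an explicit construction or computer-aided verification for a handful of orbit representatives. Nonetheless, the transitivity supplied by $PSL(2,17)$ should keep both the number of edge-pair orbits and the number of awkward cases bounded, so that the final verification reduces to checking a short explicit list of configurations, each witnessed by a concrete maximum independent set of $\BS$.
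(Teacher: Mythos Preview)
Your high-level plan---reduce by symmetry to finitely many edge-pair types, then exhibit for each type an explicit maximum independent set avoiding all four endpoints---is exactly the shape of the paper's argument. But the paper executes it with a specific structural lemma that you do not have, and without which your plan stays a plan.

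The key step you are missing is the paper's Lemma~\ref{Lemma: BS Edge-Pair Transitive}: in $\BS$, two ordered edge pairs lie in the same $\mathrm{Aut}(\BS)$-orbit \emph{if and only if} they are distance-equivalent (all four pairwise distances match). You only say the orbits are ``refinements'' of the distance partition; the paper proves equality, using the intersection array and uniqueness of geodesics below level $7$. This is what collapses the orbit enumeration to a short, hand-checkable list of distance profiles (about ten cases once one normalises so that $\dist(v,x)$ is minimal), rather than a computational orbit computation under the edge stabilizer. The paper then further uses the level-set structure (Lemma~\ref{Lemma: Vertex Level-sets} and Corollary~\ref{cor:mergingpaths}) to show that several distance profiles simply cannot occur, pruning the list.

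Your proposed engine---an inclusion--exclusion lower bound over the orbit $\mathcal{O}(I_1)$---is not carried out, and there is no reason to expect it to be positive in every case; you yourself anticipate ``residual orbits'' needing ad hoc constructions or computer verification. The paper bypasses all of this: for each surviving distance profile it writes down an explicit edge pair and an explicit independent set (either $I_1$ itself or a one- to three-vertex modification $I_1(A;C)$), together with a certifying geodesic. So while your outline is not wrong, the substance of the proof---the edge-pair transitivity lemma and the concrete list of witnesses---is absent, and your counting substitute is neither computed nor shown to suffice.
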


Parts (1)--(3) of Theorem~\ref{thm:BSprops} will be key to our proof of Lemma~\ref{Lemma: Every Distance Can be Avoided}.

For each vertex $v$ of $\BS$, the set of all \emph{level sets} $D_i(v) = \{u \in V: \dist(u,v) = i\}$ for $v$ forms a partition of $V(\BS)$.
By Theorem~\ref{thm:BSprops}, the choice of the vertex $v$ is not important, and
we write $D_i$ rather than $D_i(v)$ in Figure~\ref{fig:vertex-levels}, which is a drawing of $\BS$ that highlights its level sets.

\begin{figure}[h!]
\begin{center}
\includegraphics[scale = 0.35]{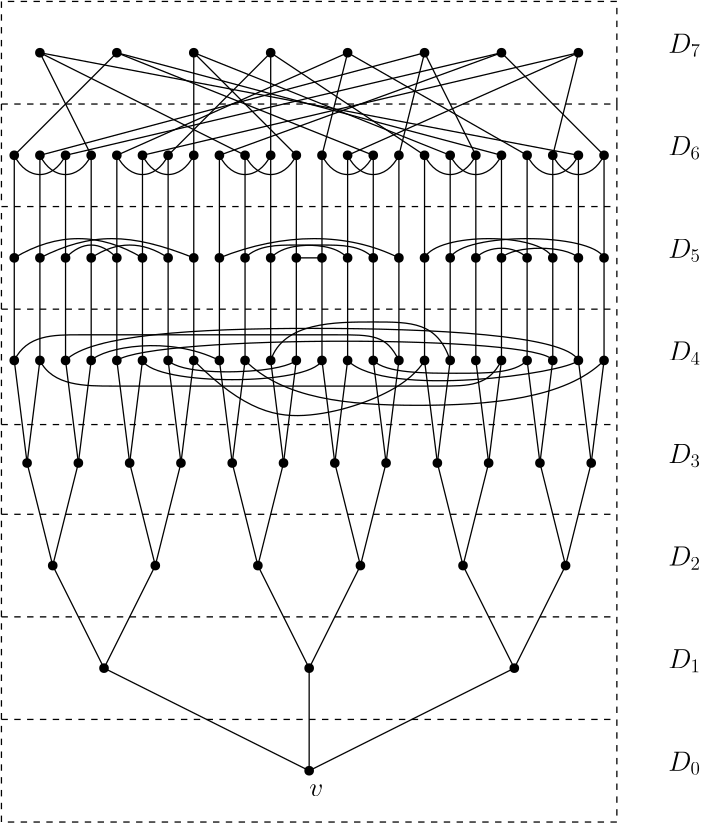}
\end{center}
\caption{A drawing of the Biggs-Smith graph showing the vertex level sets.}
\label{fig:vertex-levels}
\end{figure}

Our upcoming arguments will use some specific consequences of the fact that $\BS$ has intersection array $(b_0,\ldots,b_6;c_1,\ldots,c_7)=(3,2,2,2,1,1,1;1,1,1,1,1,1,3)$ (see also Figure~\ref{fig:vertex-levels}). We state these explicitly in the following lemma for easy reference.

\begin{lemma}\label{Lemma: Vertex Level-sets}
    Let $v \in V(\BS)$ and for $0 \leq i \leq 7$, let $D_i = \{u \in V(\BS): \dist(u,v) = i\}$.
    The following statements hold.
    \begin{enumerate}
      \item For each $i < 7$, each $u \in D_i$ has a unique neighbour in $D_{i-1}$.
        \item For each $i < 7$ and each $u \in D_i$ there exists a unique $(u,v)$-geodesic.
        \item $D_i$ is an independent set for each $i \leq 3$, and also for $i=7$.
        \item $D_i$ induces a matching for each $i$, where $4 \leq i \leq 6$.
        \item No two vertices in $D_7$ have a common neighbour.
        \item For $5\leq i\leq 7$ and for each vertex $z\in D_i$ the following holds. Let $P_z$ be a $(v,z)$-geodesic, let $(x,z)$ be the last edge of $P_z$, and let $y$ be the unique neighbour of $x$ that is not in $P_z$. Then $x,y\in D_{i-1}$.
        \item For any 9-cycle $C$ containing $v$ and any vertex $z$ of $C$ at distance 4 from $v$, the unique neighbour of $z$ that is not on $C$ is in $D_5(v)$.
    \end{enumerate}
\end{lemma}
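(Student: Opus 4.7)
The plan is to derive every item directly from the intersection array $(b_0,\ldots,b_6;c_1,\ldots,c_7)=(3,2,2,2,1,1,1;1,1,1,1,1,1,3)$ guaranteed by Theorem~\ref{thm:BSprops}(3). Since $\BS$ is cubic, the three neighbours of any $u\in D_i$ split as $c_i$ in $D_{i-1}$, $a_i:=3-b_i-c_i$ in $D_i$, and $b_i$ in $D_{i+1}$. A quick tabulation gives $a_0=a_1=a_2=a_3=0$, $a_4=a_5=a_6=1$, and $a_7=0$. With this table in hand, most items are immediate readings.

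For (1), $c_i=1$ whenever $1\leq i\leq 6$ is exactly the statement that each $u\in D_i$ with $i<7$ has a unique neighbour in $D_{i-1}$. For (3), $a_i=0$ for $i\in\{0,1,2,3,7\}$ is precisely the claim that $D_i$ is independent, and for (4), $a_i=1$ for $i\in\{4,5,6\}$ says that $D_i$ induces a $1$-regular subgraph, i.e., a matching. Part (5) uses (3) together with $c_7=3$ (so every $z\in D_7$ has all three neighbours in $D_6$) and $b_6=1$ (so no vertex of $D_6$ has two neighbours in $D_7$): a common neighbour $w$ of two distinct vertices of $D_7$ would contradict the latter. Part (2) is a short induction on $i$ using (1): any $(u,v)$-geodesic is determined by its endpoint because the predecessor at each step is the unique neighbour in the previous level.

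Part (6) is a neighbour count at $x$. The predecessor of $x$ on $P_z$ is the unique neighbour of $x$ in $D_{i-2}$ (by $c_{i-1}=1$), and $z$ itself is the unique neighbour of $x$ in $D_i$ (by $b_{i-1}=1$), where $i-1\in\{4,5,6\}$ across the relevant range. The remaining neighbour $y$ must therefore occupy the one available $a_{i-1}=1$ slot in $D_{i-1}$.

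For part (7), write $C=v_0v_1\cdots v_8$ with $v_0=v$ and, without loss of generality, $z=v_4$. Cyclic distance along $C$ gives $\dist(v,v_3)\leq 3$ and $\dist(v,v_5)\leq 4$ (the latter via the path $v_5v_6v_7v_8v_0$). Since $v_3$ and $v_5$ are neighbours of $z\in D_4$, we obtain $v_3\in D_3$ and $v_5\in D_3\cup D_4$. The option $v_5\in D_3$ would force $z$ to have two neighbours in $D_3$, contradicting $c_4=1$, so $v_5\in D_4$. Hence the three neighbours of $z$ fall one per class into $D_3$, $D_4$, $D_5$, and the one outside $C$ is necessarily the neighbour in $D_5$. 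The only place needing real care is this last cycle argument; everything else is a mechanical unpacking of the intersection array.
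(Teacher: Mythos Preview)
Your proof is correct and follows essentially the same approach as the paper: both derive every item directly from the intersection array, with (1)--(5) being immediate readings of the $b_i,c_i,a_i$ values, (6) a neighbour count at $x$, and (7) an analysis of the two on-cycle neighbours of $z$. Your treatment of (6) and especially (7) is somewhat more explicit than the paper's terse appeals to earlier parts, but the underlying arguments are the same.
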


\begin{proof}
Part (1) holds because $c_i=1$ for each $i<7$, and (2) follows immediately since $c_j=1$ for each $1\leq j\leq i\leq 6$. Part (3) holds because $b_i+c_i=3$ for these values of $i$ (taking $b_7=0$) and so all neighbours of $u$ are in levels different from $D_i$. For $4\leq i\leq 6$ note $b_i+c_i=2$ and hence each vertex in $D_i$ has exactly one neighbour in $D_i$, verifying (4). Part (5) follows from (3) and the fact that $b_6=1$. To verify (6), it is clear from the definition that $x\in D_{i-1}$. Then since $4\leq i-1\leq 6$ we know from (4) that $x$ has exactly one neighbour in $D_{i-1}$, which is therefore $y$. Finally for (7), note that (2) and (4) imply that $z$ has a unique neighbour in $C\cap D_4(v)$. Hence, its unique neighbour not in $C$ is in $D_5(v)$ by (1) and (4).
\end{proof}

We will apply Lemma~\ref{Lemma: Vertex Level-sets} to prove a stronger
property of the automorphism group of the Biggs-Smith graph, which concerns pairs of edges or edge-vertex pairs. We say that a pair of edges $(a_0,a_1)$, $(b_0,b_1)$ is \emph{distance-equivalent} to another pair $(r_0,r_1)$, $(s_0,s_1)$ of edges if for each $(i,j)\in\{0,1\}\times\{0,1\}$ we have $\dist(a_i,b_j)=\dist(r_i,s_j)$. Similarly we say that the edge-vertex pair $(a_0,a_1)$, $b$ is distance-equivalent to another such pair $(r_0,r_1)$, $s$ if for each $(i,j)\in\{0,1\}$ we have $dist(a_i,b)=\dist(r_i,s)$. Here we use the ordered pair notation for edges to emphasise that the definition depends on the ordering of the vertices within that edge.

\begin{lemma}\label{Lemma: BS Edge-Pair Transitive} Let $\BS$ denote the Biggs-Smith graph.
  \begin{enumerate}
  \item    Suppose the edge-vertex pair $(u,v), x$ in $\BS$ is distance-equivalent to the pair $(u',v'),x'$ in $\BS$.
  Then there exists an automorphism $\phi: V(\BS) \rightarrow V(\BS)$ such that $\phi(u) = u'$, $\phi(v) = v'$, and $\phi(x) = x'$.
  \item    Suppose the edge pair $(u,v),(x,y) \in E(\BS)$ is distance-equivalent to the edge pair $(u',v'),(x',y') \in E(\BS)$.
  Then there exists an automorphism $\phi: V(\BS) \rightarrow V(\BS)$ such that $\phi(u) = u'$, $\phi(v) = v'$, $\phi(x) = x'$, and $\phi(y) = y'$.
\end{enumerate}
  \end{lemma}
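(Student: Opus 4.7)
The plan is to prove Part~(1) by combining the distance-transitivity of $\BS$ with an orbit-transitivity analysis of the arc stabilizer, and then to deduce Part~(2) by iterating Part~(1) and carrying out a parallel orbit-transitivity analysis one level up.

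For Part~(1), distance-transitivity immediately gives arc-transitivity, so fix $\phi_0 \in \text{Aut}(\BS)$ with $\phi_0(u) = u'$, $\phi_0(v) = v'$, and set $x'' = \phi_0(x)$. The hypothesis says $x''$ lies in the double-distance class
\[
D_{a,b}(u',v') := \{z \in V(\BS) : \dist(u',z) = a,\ \dist(v',z) = b\}
\]
for $(a,b) = (\dist(u,x), \dist(v,x))$, so it suffices to show that $H := \text{Stab}(u',v')$ acts transitively on each such class. I would split on $(a,b)$ using Lemma~\ref{Lemma: Vertex Level-sets}. When $|a-b| = 1$ and $\max(a,b) \leq 6$, distance-transitivity applied to $(v',x'')$ and $(v',x')$ produces an automorphism $\psi$; the unique-geodesic property (Lemma~\ref{Lemma: Vertex Level-sets}(2)) forces $\psi(u') = u'$, hence $\psi \in H$. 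The diagonal case $a = b$ is impossible for $a \in \{1,2,3,7\}$ (the first three by girth~$9$, the last because $c_7 = 3$ puts all neighbours of a vertex in $D_7(x')$ into $D_6(x')$), while for $a \in \{4,5,6\}$ the intersection array yields exactly one neighbour of $v'$ in $D_a(x')$, which must then be $u'$.

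The delicate subcase is $\max(a,b) = 7$, say $\dist(v',x') = 7$. Then $|\text{Stab}(v',x')| = |G_{v'}|/|D_7(v')| = 24/8 = 3$, and the claim reduces to showing this order-$3$ subgroup $K$ acts transitively on $N(v')$. If instead $K$ fixed $N(v')$ pointwise, an induction through the level sets of $v'$ would force $K$ to fix every vertex, contradicting $|K| = 3$: at each level $1 \leq i \leq 6$, the condition $c_i = 1$ (Lemma~\ref{Lemma: Vertex Level-sets}(1)) reduces the action to permuting the $b_{i-1} \in \{1,2\}$ neighbours in $D_i$ of an already-fixed vertex in $D_{i-1}$, and a group of order~$3$ acting on a set of size $\leq 2$ must act trivially; at level~$7$, $b_6 = 1$ together with $c_7 = 3$ (equivalently Lemma~\ref{Lemma: Vertex Level-sets}(5)) says the induced map $D_6 \to D_7$ is surjective, so $K$ fixes $D_7$ pointwise as well. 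Hence $K$ acts nontrivially on $N(v')$, and since $|K| = 3 = |N(v')|$, this action is a $3$-cycle, so transitive.

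For Part~(2), I would apply Part~(1) to obtain $\phi_1$ with $\phi_1(u,v,x) = (u',v',x')$, set $y_1 := \phi_1(y)$, and apply Part~(1) again to the distance-equivalent edge-vertex pairs $(x',y_1),u'$ and $(x',y'),u'$ to obtain $\phi_2$ with $\phi_2(x',y_1,u') = (x',y',u')$. Then $\phi_2 \circ \phi_1$ maps $(u,v,x,y)$ to $(u', \phi_2(v'), x', y')$, and $\phi_2(v')$ is a neighbour of $u'$ at the correct distance pair to $(x',y')$. It remains to establish a Part~(2) analogue of the orbit statement above: the stabilizer $\text{Stab}(u',x',y')$ acts on $N(u')$ with orbits given by the distance classes relative to $(x',y')$. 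This is carried out by the same case analysis as in Part~(1), with the single vertex $x'$ replaced by the edge $(x',y')$; the delicate subcase again concerns configurations reaching $D_7$, and the rigidity-via-induction argument used for Part~(1) adapts without essential change. The main obstacle throughout is the $D_7$ subcase, the only place where $c_7 = 3$ breaks uniqueness of geodesics; everywhere else the chain $c_i = 1$ keeps the graph rigid enough that Lemma~\ref{Lemma: Vertex Level-sets}(2) combined with distance-transitivity suffices.
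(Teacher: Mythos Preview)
Your Part~(1) is correct but takes a longer route than the paper. The paper never computes $|\text{Stab}(v',x')|$ or runs an induction through the level sets; instead it chooses the automorphism from distance-transitivity so that the third vertex is \emph{forced}. Concretely: when $\dist(u,x)<7$ the paper picks $\psi$ with $\psi(u)=u'$, $\psi(x)=x'$ and observes (via Lemma~\ref{Lemma: Vertex Level-sets}(1,3,4)) that $u'$ has at most one neighbour in the relevant level set of $x'$, so $\psi(v)=v'$ is automatic. When $\dist(u,x)=7$ the paper picks $\psi$ with $\psi(v)=v'$, $\psi(x)=x'$ and invokes Lemma~\ref{Lemma: Vertex Level-sets}(5) (no two vertices in $D_7$ share a neighbour) to conclude $\psi(u)=u'$ in one line. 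Your rigidity-via-induction argument is a valid alternative, but it reproves a consequence of Lemma~\ref{Lemma: Vertex Level-sets}(5) from scratch. (A minor wrinkle: in your $|a-b|=1$ subcase you apply distance-transitivity to $(v',x'')$; this only pins down $u'$ when $u'$ lies on the $(v',x')$-geodesic, i.e.\ when $a<b$. For $a>b$ you need to use $(u',x'')$ instead.)

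For Part~(2) your scheme is genuinely more convoluted than the paper's, and the final step is not carried out. The paper applies Part~(1) \emph{once} to get $\psi$ with $\psi(u,v,x)=(u',v',x')$ and then shows $\psi(y)=y'$ directly, by exactly the same level-set uniqueness argument as in Part~(1) (with the roles of the edge $(u,v)$ and the vertex $x$ swapped for $(x,y)$ and $v$). Your double application of Part~(1) leaves you needing transitivity of $\text{Stab}(u',x',y')$ on distance classes in $N(u')$, and the assertion that this ``adapts without essential change'' hides real work: for instance when $\dist(u',x')=7$ one computes $\text{Stab}(u',x',y')$ to be trivial, so the claim reduces to the relevant distance class being a singleton --- true (the three neighbours of $u'$ have distances $5,6,7$ to $y'$ since $u'\in D_6(y')$), but this is a separate structural check, not the rigidity induction. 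The paper's one-pass approach sidesteps all of this.
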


\begin{proof} For (1), let distance-equivalent edge-vertex pairs be given as in the statement. We may assume
without loss of generality that $\dist(x,v) \leq \dist(x,u)$, and hence $\dist(x',v') \leq \dist(x',u')$.

    \medskip
\noindent\underline{Case 1:} $\dist(x,u)<7$.
\medskip

    By Theorem~\ref{thm:BSprops}(1) (distance-transitivity of $\BS$), we may choose an automorphism $\psi$ of $\BS$ such that $\psi(u) = u'$ and $\psi(x) = x'$.
    We will show that $\psi$ is a suitable choice, i.e. that $\psi(v)=v'$.

    Since $\psi$ is an automorphism we know that $\psi(v)$ is adjacent to $\psi(u)=u'$, and that
    $$\dist(\psi(v),x')=\dist(\psi(v),\psi(x))=\dist(v,x)=\dist(v',x'),$$
    where in the last equality we use the distance-equivalent assumption. Hence $\psi(v)$ and $v'$ are in the same level set $D_i(x')$ of $x'$, and they are both adjacent to $u'$. Since $\dist(x',v') \leq \dist(x',u')$ and $u'$ is a neighbour of $v'$ we know that either $u'\in D_i(x')$ or $u'\in D_{i+1}(x')$. Hence if $\psi(v)\not= v'$ then $u'$ has two distinct neighbours in its own level set or in the one below, neither of which is possible when $\dist(x',u')<7$ by Lemma~\ref{Lemma: Vertex Level-sets}(1,3,4). We therefore conclude that $\psi(v)=v'$.

              \medskip
\noindent\underline{Case 2:} $\dist(u,x)=7$.
\medskip

This time, using Theorem~\ref{thm:BSprops}(1) (distance-transitivity of $\BS$) we choose an automorphism $\psi$ of $\BS$ such that $\psi(v) = v'$ and $\psi(x) = x'$, and our aim is to show that $\psi(u) = u'$.

As before, since $\psi$ is an automorphism we know $\psi(u)$ is adjacent to $\psi(v)=v'$, and 
    $$\dist(\psi(u),x')=\dist(\psi(u),\psi(x))=\dist(u,x)=\dist(u',x')=7.$$
Thus $\psi(u)$ and $u'$ are both in level set $D_7(x')$, and both are adjacent to $v'$, which implies $v'\in D_6(x')$. But by Lemma~\ref{Lemma: Vertex Level-sets}(5), no two vertices in $D_7(x')$ have a common neighbour. Hence $\psi(u)=u'$ as required.

\medskip

Now we consider (2), and proceed with a proof similar to that of Part (1). Let distance-equivalent edge pairs be given as in the statement. We assume
without loss of generality that $\dist(v,y) \leq \dist(v,x)$, and hence $\dist(v',y') \leq \dist(v',x')$.

    \medskip
\noindent\underline{Case 1:} $\dist(v,x)<7$.
\medskip

    By Part (1)  we may choose an automorphism $\psi$ of $\BS$ such that $\psi(u) = u'$, $\psi(v)=v'$  and $\psi(x) = x'$. We will show that $\psi(y)=y'$.

  Since $\psi$ is an automorphism we know that $\psi(y)$ is adjacent to $\psi(x)=x'$, and that
    $$\dist(\psi(y),v')=\dist(\psi(y),\psi(v))=\dist(y,v)=\dist(y',v').$$
Therefore $\psi(y)$ and $y'$ are in the same level set of $v'$, and they are both adjacent to $x'$. By our assumption $\dist(v',y') \leq \dist(v',x')$ we know that $x'$ is either in this same level set or in the one above.  Hence if $\psi(y)\not= y'$ we again find that $x'$ has two distinct neighbours in its own level set or in the one below, neither of which is possible when $\dist(x',u')<7$ by Lemma~\ref{Lemma: Vertex Level-sets}(1,3,4). We therefore conclude that $\psi(y)=y'$.

           \medskip
\noindent\underline{Case 2:} $\dist(v,x)=7$.
\medskip

By Part (1) we may choose an automorphism $\psi$ of $\BS$ such that $\psi(u)=u'$, $\psi(v) = v'$ and $\psi(y) = y'$, and our aim is to show that $\psi(x) = x'$.

Again since $\psi$ is an automorphism we know that $\psi(x)$ is adjacent to $\psi(y)=y'$, and that
    $$\dist(\psi(x),v')=\dist(\psi(x),\psi(v))=\dist(x,v)=\dist(x',v')=7.$$
Thus $\psi(x)$ and $x'$ are both in $D_7(v')$, and both are adjacent to $y'$. But once more by Lemma~\ref{Lemma: Vertex Level-sets}(5), no two vertices in Level 7 have a common neighbour. Hence $\psi(x)=x'$, thus completing the proof.
\end{proof}

The last two results in this section are technical statements that will be useful in the proof of Lemma~\ref{Lemma: Every Distance Can be Avoided}. Their proofs are given in the appendix. For Lemma~\ref{lem:acycle} it is helpful to look at
Figures~\ref{Fig:Basic Biggs-Smith} and~\ref{fig:I1}.

\begin{lemma}\label{lem:acycle}
  Every path of length at most 7 on the $a$-cycle is a geodesic.
\end{lemma}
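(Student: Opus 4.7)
The plan is to argue by contradiction via a minimal counterexample on the path length. Suppose the lemma fails, and let $k$ be the smallest integer with $1 \le k \le 7$ such that some $a$-cycle path of length $k$ is not a geodesic in $\BS$. By the rotational symmetry $ia \mapsto (i+j)a$ of $\BS$ (which permutes the $a$-cycle and the four cycle classes), we may assume the path $P$ runs from $1a$ to $(1+k)a$, with indices taken modulo $17$. Let $\ell := \dist(1a,(1+k)a) < k$ and fix a $(1a,(1+k)a)$-geodesic $P'$ of length $\ell$.

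My first step would be to show that $P$ and $P'$ are internally vertex-disjoint. If $P'$ visits some interior vertex $ja$ of $P$ (where $2 \le j \le k$), the minimality of $k$ forces $\dist(1a,ja)=j-1$ and $\dist(ja,(1+k)a)=k-(j-1)$, so $\ell \ge (j-1)+(k-(j-1))=k$, contradicting $\ell<k$. Hence $P\cup P'$ is a simple cycle of length $k+\ell$, and by Theorem~\ref{thm:BSprops}(3) the girth of $\BS$ is $9$, so $k+\ell \ge 9$. Combined with $\ell \le k-1$, this rules out $k \le 4$ immediately.

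For the remaining cases $k \in \{5,6,7\}$, with $\ell$ ranging over $\max(1,9-k) \le \ell \le k-1$, I would proceed by direct local case analysis. The same minimality argument (applied to $2a$ in place of an interior vertex, and symmetrically at the other end) rules out $P'$ using the $a$-cycle edges $\{1a,2a\}$ or $\{ka,(1+k)a\}$, so the vertex $v_1$ immediately after $1a$ on $P'$ lies in $\{17a,1e\}$ and the vertex $v_{\ell-1}$ immediately before $(1+k)a$ lies in $\{(2+k)a,(1+k)e\}$. When $\ell \ge 4$, extending one step further confines $v_2$ to $\{16a,17e,1b,1f\}$ and $v_{\ell-2}$ to $\{(3+k)a,(2+k)e,(1+k)b,(1+k)f\}$; for $\ell=6$ one more layer of the same analysis is required. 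For each of the nine admissible pairs $(k,\ell)$, I would verify directly that no edge (or short enough path) connects the two frontier sets.

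The main obstacle is the bookkeeping across these sub-cases. Each, however, reduces to a routine neighborhood check: the frontier vertices near $1a$ are confined to $H_{17} \cup H_1$ and those near $(1+k)a$ to $H_{1+k} \cup H_{2+k}$, and for $k \le 7$ these blocks remain sufficiently far apart under each of the $b$-, $c$-, and $d$-cycle step offsets ($4$, $2$, and $8$) that no short connection between the two frontier sets exists, yielding the desired contradiction in every case.
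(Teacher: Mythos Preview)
Your approach is sound and, once the nine frontier checks are actually carried out, yields a correct proof; I verified each of the pairs $(k,\ell)$ and the two depth-3 frontier sets for $(k,\ell)=(7,6)$ are indeed disjoint. However, the route is genuinely different from the paper's. You run a bidirectional breadth-first expansion from the two endpoints and rule out a meeting of the frontiers by enumeration. The paper instead exploits the $H$-expansion structure: it first shows that any alternative path touching the $c$- or $d$-cycle already has length at least $7$ (reaching such a cycle from the $a$-cycle and returning forces at least two $ie,if$-pairs plus two $c$- or $d$-vertices), so only $a$- and $b$-cycle edges matter. Then it observes that any competing path of length at most $6$ consists of two transition subpaths $ia,ie,ib$ and $jb,je,ja$, which do not change the $H$-index $i$, together with at most two edges on the $a$- or $b$-cycle, each of which changes the index by an element of $\{\pm1,\pm4\}$ modulo $17$. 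Since neither $6$ nor $7$ is a sum of at most two such elements, no shorter path from $H_1$ to $H_7$ or $H_8$ exists. This single arithmetic observation replaces your nine case checks.

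One small correction to your final paragraph: the claim that the frontier near $1a$ is ``confined to $H_{17}\cup H_1$'' is not accurate --- already $v_2$ can be $16a\in H_{16}$, and at depth three the $b$-step sends $1b$ to $5b\in H_5$ or $14b\in H_{14}$, while from the other end $8b$ reaches $4b\in H_4$. The frontier sets still fail to meet, so your argument survives, but the heuristic ``the blocks remain far apart'' is not the real mechanism; what actually makes the enumeration succeed is precisely the index-arithmetic obstruction the paper isolates.
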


For Part (2) of the following corollary, the \emph{4-displaced path} $P$ from $x\in D_6(u)$ to $u$ is the path that follows the $(x,u)$-geodesic to $D_4(u)$, then takes the unique edge inside $D_4(u)$ to the resulting vertex $z$, and then follows the  $(z,u)$-geodesic to $u$. Note that $P$ has length $7$, and it is uniquely defined by Lemma~\ref{Lemma:  Vertex Level-sets}(2,4). Figure~\ref{fig:vertex-levels} is a helpful illustration for this corollary.

\begin{corollary}\label{cor:mergingpaths}
The following statements hold for every vertex $u$ in $\BS$.
\begin{enumerate}
\item For each $4\leq i\leq 6$, and for each edge  $(x,y)$ inside a level set $D_i(u)$ of $u$, the $(x,u)$-geodesic and the $(y,u)$-geodesic first meet at a vertex  of $D_{i-4}(u)$.
\item Let $(x,y)$ be an edge inside level set $D_6(u)$. Then the 4-displaced paths from $x$ to $u$ and from $y$ to $u$ meet only at $u$.
\end{enumerate}
\end{corollary}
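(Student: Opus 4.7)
The plan is to prove Part~(1) by combining a girth-based cycle argument with a reduction via distance-transitivity, and to prove Part~(2) by working level-by-level through the same-level coincidences.

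For Part~(1), I fix the edge $(x,y) \subseteq D_i(u)$ and consider the unique $(x,u)$- and $(y,u)$-geodesics $P_x$ and $P_y$, which exist by Lemma~\ref{Lemma: Vertex Level-sets}(2) since $i \leq 6$. If these geodesics first meet at $w \in D_j(u)$, then the initial segments from $x$ and $y$ up to $w$, together with the edge $(x,y)$, form a cycle of length $2(i-j) + 1$. The girth-$9$ condition (Theorem~\ref{thm:BSprops}(3)) then forces $j \leq i-4$. For $i = 4$ this gives $j = 0$, settling that case. For $i \in \{5,6\}$ the girth bound still allows $j < i - 4$ (cycles of length $11$ or $13$ are not forbidden), so to match the lower bound I apply Lemma~\ref{Lemma: BS Edge-Pair Transitive}(1) to the edge-vertex pair $((x,y), u)$: since $\dist(u,x) = \dist(u,y) = i$ and $\dist(x,y) = 1$, all such triples lie in one orbit of $\mathrm{Aut}(\BS)$, so it suffices to verify $j = i-4$ on one explicit edge in $D_i(u)$, which is a routine finite check using the description of $\BS$ from Section~\ref{sec:BSG}.

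For Part~(2), the interior vertices of the 4-displaced path from $x$ are $x_5, x_4, x_4', x_3', x_2', x_1'$, lying at levels $5, 4, 4, 3, 2, 1$ from $u$, and similarly for $y$; coincidences can occur only between same-level vertices. The inequality $x_5 \neq y_5$ follows from $b_5 = 1$ (otherwise $x_5$ would have two $D_6$-neighbours), which forces $x_4 \neq y_4$ via $b_4 = 1$, and then uniqueness of the $D_4$-matching gives $x_4' \neq y_4'$. The crossed coincidence $x_4 = y_4'$ (equivalently $x_4' = y_4$) would produce a $6$-cycle, contradicting girth $9$. What remains are the coincidence possibilities $x_3' = y_3'$, $x_2' = y_2'$, and $x_1' = y_1'$, none of which is excluded by girth alone (the resulting cycles would have lengths $9, 11, 13$, all permitted). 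Invoking Lemma~\ref{Lemma: BS Edge-Pair Transitive}(1) again, it suffices to check these distinctness claims on a single representative $D_6$-edge.

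The principal obstacle is that the girth-and-cycle-length argument gives only a one-sided bound: it forces $j \leq i-4$ in Part~(1), but does not exclude $j < i-4$, and analogously it does not exclude the deeper-level coincidences in Part~(2). Closing both gaps therefore requires the automorphism-based reduction to a single orbit representative, followed by a routine but essentially unavoidable finite verification in the explicit graph $\BS$.
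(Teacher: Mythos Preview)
Your proposal is correct, and its core is the same as the paper's: invoke Lemma~\ref{Lemma: BS Edge-Pair Transitive}(1) to reduce each statement to a single representative edge in $D_i(u)$, then verify explicitly. The paper does exactly this, taking $u=17a$ and $(x,y)=(ia,ie)$ for $i=4,5,6$, and simply writes out the relevant geodesics and 4-displaced paths. The difference is that you interpose a structural layer first---the $2(i-j)+1\ge 9$ cycle argument for Part~(1), and the $b_5=b_4=1$ and matching-uniqueness arguments for Part~(2)---to eliminate several coincidence cases before falling back on the explicit check, whereas the paper skips this and verifies everything on the concrete example. Your extra layer is illuminating (it explains \emph{why} the $i=4$ case of Part~(1) and the level-$4,5,6$ coincidences in Part~(2) are forced) but does not avoid the explicit computation; since you leave that as ``routine,'' your proposal becomes a complete proof only once you actually exhibit a representative and read off the meeting levels, as the paper does.
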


\section{Proof of Lemma~\ref{Lemma: Every Distance Can be Avoided}}\label{sec:mainpf}

Our goal in this section is to prove Lemma~\ref{Lemma: Every Distance Can be Avoided}, that is, we show that for each edge pair $(u,v),(x,y)$ in $\BS$ there exists a maximum independent set $I$ in $\BS$ that avoids $u,v,x,y$. Our proof will take advantage of the very symmetric properties of $\BS$, which will reduce the number of edge pairs we need to check to a manageable number.

We will make use of a particular maximum independent set in the Biggs-Smith graph, that we call $I_1$.
See Figure~\ref{fig:double} and Figure~\ref{fig:I1}. Note that $|I_1|=43=\alpha(\BS)$.

\begin{figure}[ht!]
\begin{center}
\scalebox{0.75}{
    \includegraphics[width=0.15\textwidth]{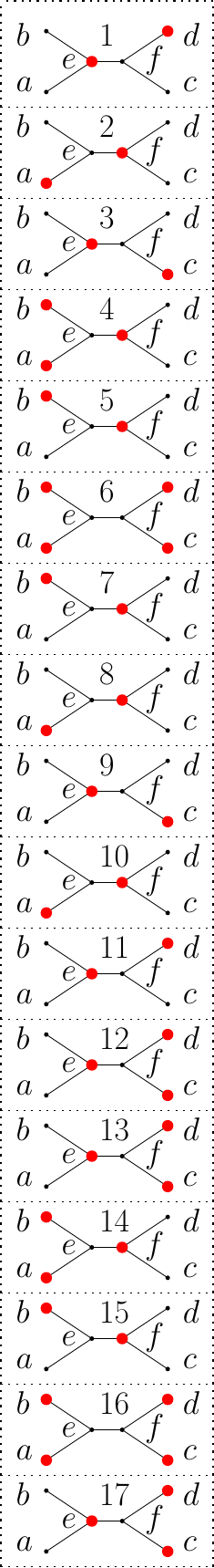}
}
\end{center}
\caption{The vertices of the Biggs-Smith graph.
Edges of the four 17-cycles are not shown. The vertices of the maximum independent set we call $I_1$ are indicated in red.}
\label{fig:I1}
\end{figure}

We introduce the following shorthand notation to denote minor modifications to our independent set $I_1$. For $A\subset I_1$, $C\subset I_1\setminus A$ we write $I_1(A;C)$ for the set
$(I_1\setminus A) \cup C$. 
Often we will drop the set brackets when using this notation,
for example, we write $I_1(x;y)$ to mean $I_1(\{x\};\{y\})$.

We are now ready to prove Lemma~\ref{Lemma: Every Distance Can be Avoided}. It is helpful to consult Figure~\ref{fig:I1} throughout this proof.

\begin{proof}[Proof of Lemma~\ref{Lemma: Every Distance Can be Avoided}]
Let $(u,v),(x,y)\in E(\BS)$ be any pair of distinct edges in $\BS$. 
Since the image of an independent set under an automorphism is also an independent set,
by Lemma~\ref{Lemma: BS Edge-Pair Transitive} and Theorem~\ref{thm:BSprops}(2) ($\alpha(\BS)=43$) it is sufficient
to exhibit an independent set $I$ of size $43$ and a pair of edges $(u',v'),(x',y')$ that is distance-equivalent to the pair $(u,v),(x,y)\in E(\BS)$, such that $I\cap \{u',v',x',y'\} = \emptyset$. 

We assume without loss of generality that the shortest distance from the edge $(u,v)$ to the edge $(x,y)$ 
is $\dist(v,x)=d$. This naturally breaks the proof into cases based on the other pairwise distances amongst $u,v,x,y$.

\medskip
\noindent\underline{Case 1:} $\dist(u,x) = d+1$.
\medskip

Recall $u,v$ are adjacent. As $\dist(u,x) = d+1$ and $\dist(v,x) = d$, the geodesic from $u$ to $x$ that contains $v$ as an interior vertex is a $(u,x)$-geodesic.
We further divide this case into smaller cases, depending on the distance $\dist(u,y)$.

\medskip
\noindent\underline{Case 1.a:} $\dist(u,y) = d+2$.
\medskip

By Theorem~\ref{thm:BSprops}(3) (the diameter of $\BS$ is $7$) we find
$\dist(u,y) = d+2\leq 7$, which implies $d\leq 5$. 
Furthermore, we note that $\dist(v,y)=d+1$. This holds since $\dist(v,x) = d$ and $x,y$ are adjacent, implying $\dist(v,y)\leq d+1$, while $\dist(v,y) \leq d$ would imply $\dist(u,y) \leq d+1$, a contradiction.

For each $d\leq 5$ we provide an independent set $I$ of size $43$ and a pair of edges $(u',v'),(x',y')$ with $\{u',v',x',y,\}\cap I=\emptyset$ and $\dist(v',x') = d$, $\dist(v',y') = d+1$, $\dist(u',x')=d+1$, and $\dist(u',y')=d+2$.
For each possible value of $d$ we provide a $(u',y')$-geodesic that contains $v'$ and $x'$ as interior vertices. For $d\leq 2$ the given path is a geodesic by Theorem~\ref{thm:BSprops}(3) ($\BS$ has girth 9), and for $d\geq 3$ the given path is a geodesic by Lemma~\ref{lem:acycle}. 
\begin{itemize}
    \item $d=0$: edges $(11a,12a),(12a,13a)$, set $I_1$, path $11a,12a,13a$.
    \item $d=1$: edges $(8b,12b),(12e,12f)$, set $I_1(12e;12a)$, path $8b,12b,12e,12f$.
    \item $d=2$: edges $(2c,4c),(8c,10c)$, set $I_1$, path $2c,4c,6c,8c,10c$.
    \item $d=3$: edges $(16a,15a),(12a,11a)$, set $I_1(4b,17e,16a;8b,17b,17a)$, path $16a,15a,14a,\allowbreak 13a,12a,11a$.
    \item $d=4$: edges $(1a,17a),(13a,12a)$, set $I_1$, path $1a,17a,16a,15a,14a,13a,12a$.
    \item $d=5$: edges $(1a,17a),(12a,11a)$, set $I_1$, path $1a,17a,16a,15a,14a,13a,12a,11a$.
\end{itemize}

\medskip
\noindent\underline{Case 1.b:} $\dist(u,y) = d+1$.
\medskip

Then, $x,y \in D_{d+1}(u)$. As $x,y$ are adjacent this implies that $D_{d+1}(u)$ is not an independent set.
Hence by Lemma~\ref{Lemma: Vertex Level-sets}(3) we know $4 \leq d+1 \leq 6$, which implies $3 \leq d \leq 5$.
As $d+1\leq 6$, by Lemma~\ref{Lemma: Vertex Level-sets}(2) there is a unique $(u,x)$-geodesic, and
recall that since we are in Case 1 we know that it contains $v$. 

To determine whether $\dist(v,y)=d$ or $d+1$, we note that  $\dist(v,y)=d$ if and only if the $(x,u)$-geodesic and the $(y,u)$-geodesic meet at $v$. By Corollary~\ref{cor:mergingpaths}(1), whether this happens or not is entirely determined by the value of $d$. Indeed (see also Figure~\ref{fig:vertex-levels}) the corollary implies that if $d=3$ then $\dist(v,y) = d+1 =4$, and if $d = 4,5$, then $\dist(v,y) = d$.

For each $3\leq d\leq 5$ we provide an independent set $I$ of size $43$ and a pair of edges $(u',v'),(x',y')$ with $\{u',v',x',y,\}\cap I=\emptyset$ and $\dist(v',x') = d$, $\dist(v',y')=d$ or $d+1$ as determined by $d$, $\dist(u',x')=d+1$, and $\dist(u',y')=d+1$.
In each case we provide a geodesic $P_z$ of length $d+2=\dist(u',x')+1$ from $u'$ to a neighbour $z\not=y'$ of $x'$, that contains $v'$ and $x'$ as interior vertices. This will then certify that $\dist(u',y')=\dist(u',x')=d+1$ by Lemma~\ref{Lemma: Vertex Level-sets}(6).
The given paths are all geodesics by Lemma~\ref{lem:acycle}.

\begin{itemize}
    \item $d=3$: edges $(1a,17a),(14a,14e)$, set $I_1(13e,14a;13a,13b)$, and path \newline $1a,17a,16a,15a,14a,13a$. 
    \item $d=4$: edges $(1a,17a),(13a,13e)$, set $I_1(13e;13b)$, and path \newline $1a,17a,16a,15a,14a,13a,12a$.
  \item $d$=5: edges $(13a,12a),(7a,7e)$, set $I_1$, and path \newline $13a,12a,11a,10a,9a,8a,7a,6a$.
\end{itemize}

\medskip
\noindent\underline{Case 1.c:} $\dist(u,y) = d$.
\medskip

Note that if $\dist(v,y)=d+1$ then by interchanging the roles of $u$ and $v$, and also the roles of $x$ and $y$, we return to Case 1.b. Hence we may assume that $\dist(v,y)=d$.

Since $d$ was chosen as the smallest pairwise distance between $\{u,v\}$ and $\{x,y\}$, we know that $\dist(v,y),\dist(u,y)\geq d$.
Hence $y$ is not an internal vertex on a $(v,x)$-geodesic. This tells us that there are two distinct $(u,x)$-geodesics (both of length $d+1$), one starting with the edge $(u,v)$ and the other ending with the edge $(y,x)$. Hence by Lemma~\ref{Lemma: Vertex Level-sets}(1) we find $d+1=7$ and so $d=6$.

We provide an independent set $I$ of size $43$ and a pair of edges $(u',v'),(x',y')$ with $\{u',v',x',y'\}\cap I=\emptyset$ and $\dist(v',x') = 6$, $\dist(v',y') = 6$, $\dist(u',x')=7$, and $\dist(u',y')= 6$.
We also provide two internally-disjoint $(u',x')$-geodesics, one containing $v'$, the other containing $y'$, as internal vertices.
\begin{itemize}
    \item edges $(17a,1a),(7a,7e)$, set $I_1$, \newline 
    the $(u',x')$-geodesic containing $v'$ is $17a,1a,2a,3a,4a,5a,6a,7a$, and this is a geodesic by Lemma~\ref{lem:acycle},\newline
    the $(u',x')$-geodesic containing $y'$ is $17a,16a,16e,16b,3b,7b,7e,7a$, which is also a geodesic since it has the same length.
\end{itemize}

\noindent This concludes Case 1. \hfill $\diamond$

\medskip
\noindent\underline{Case 2:} $\dist(u,x) = d$.
\medskip

If $\dist(v,y)=d+1$ or $\dist(u,y)=d+1$ then (as in Case 1.c) we may interchange the roles of $u$ and $v$, and/or the roles of $x$ and $y$, and reduce to a previous case. Hence we may assume $\dist(v,y)=d$ and $\dist(u,y)=d$, i.e. all four distances are equal to $d$. For this proof it helps to consult Figure~\ref{fig:vertex-levels}.

First we claim that $d=5$ or $d=6$. Note that the edge $(u,v)$ lies inside $ D_{d}(x)$, and hence $4\leq d\leq 6$ by Lemma~\ref{Lemma: Vertex Level-sets}(3). Since $\dist(y,u)=\dist(y,v)=d$ we know $(y,x)$ is not the last edge of a $(v,x)$-geodesic or a $(u,x)$-geodesic. Hence for a path $P$ of length $d$ to reach $y\in D_1(x)$ from $u\in D_d(x)$, $P$ must contain exactly one vertex from each level $D_d(x),\ldots,D_1(x)$ except one $D_i(x)$, in which it contains exactly two vertices, that are adjacent. Hence $i\geq 4$ by Lemma~\ref{Lemma: Vertex Level-sets}(3). But $i=d$ is not possible because $(u,v)$ is the unique edge in $D_d(x)$ incident to $u$ by Lemma~\ref{Lemma: Vertex Level-sets}(4), and $\dist(y,v)=d$. Hence $d>i\geq 4$. Hence we conclude $d=5$ or $d=6$ as claimed.

Next we eliminate the possibility that $d=6$. In this case, first suppose the path $P$ contains an edge $(z,w)$ inside $D_5(x)$, where the first edge of $P$ is $(u,z)$. By Corollary~\ref{cor:mergingpaths}(1) we know that the $(z,x)$-geodesic and the $(w,x)$-geodesic contain the same neighbour $t$ of $x$. Since the $(u,x)$ geodesic is $(u,z)$ followed by the $(z,x)$-geodesic, we know $t\not=y$. But $P$ must coincide with the $(w,x)$-geodesic on levels $D_4(x),\ldots,D_1(x)$, so $P$ ends with $t\not=y$. This contradiction shows that if $P$ exists then $P=Q-x$, where $Q$  is the 4-displaced path from $u$ to $x$. In addition there exists a path $P'$ of length 6 from $v$ to $y$, which by the same arguments must be $Q'-x$ where $Q'$ is the 4-displaced path from $v$ to $x$. But Corollary~\ref{cor:mergingpaths}(2) tells us that $Q$ and $Q'$ meet only at $x$, contradicting the requirement that $P$ and $P'$ end at $y$. Hence the $d=6$ case does not occur.

    Thus we only need to verify the case $d=5$. Here we take 
 edges $(2b,2e),(8c,10c)$ and set $I_1$. To certify that all four distances are 5 we apply Lemma~\ref{Lemma: Vertex Level-sets}(7).

 $\dist(2e,8c)=5$ via 9-cycle $C_1=2e,2f,2c,4c,6c,6f,6e,6b,2b$ and edge $(6c,8c)$.

 $\dist(2e,10c)=5$ via 9-cycle $C_2=2e,2f,2d,10d,10f,10e,10b,6b,2b$ and edge $(10f,10c)$.

 $\dist(2b,8c)=5$ via 9-cycle $C_1$ (in the opposite direction) $2b,6b,6e,6f,6c,4c,2c,2f,2e$ and edge $(6c,8c)$.

 $\dist(2b,10c)=5$ via 9-cycle $C_2$ (in the opposite direction) $2b,6b,10b,10e,10f,10d,2d,2f,2e$ and edge $(10f,10c)$.

\noindent This concludes Case 2. \hfill $\diamond$
\medskip

Therefore, we have shown for every possible set of pairwise distances between edges $(u,v)$ and $(x,y)$ in $\BS$, there exists an independent set $I$ of size $43$ and a pair of edges $(u',v'),(x',y')$ 
that is distance-equivalent to $(u,v),(x,y)$
such that $I\cap \{u',v',x',y'\} = \emptyset$. 
Using Lemma~\ref{Lemma: BS Edge-Pair Transitive}(2) this completes the proof of Lemma~\ref{Lemma: Every Distance Can be Avoided}.
\end{proof}

\section{Future Work}\label{sec:future}

After proving Lemma~\ref{Lemma: Every Distance Can be Avoided}, it is natural to ask are there other cubic graphs with the same property as $\BS$?
Is there a smaller example?
In order to provide a complete picture to the reader, as well as set the stage for future work, we have taken the first steps to answering these questions.

With computer assistance we have verified that the Biggs-Smith graph is the unique cubic edge-transitive graph on at most $166$ vertices that satisfies the conclusion of Lemma~\ref{Lemma: Every Distance Can be Avoided}.
On $168$ vertices, there is another edge-transitive example.
This $168$-vertex graph is labeled as graph F168D in the Encylopedia of Graphs page on the the Foster census.
Hence, the line hypergraph of the Biggs-Smith graph is not the only counterexample to the Lov\'{a}sz conjecture.
We have further verified that F168D is the only cubic edge-transitive graph on at most $300$ vertices with this property other than the Biggs-Smith graph.

In order to check this, we used
the full list of cubic edge-transitive graphs with at most $300$ vertices produced by Conder, and Dobcs{\'a}nyi \cite{conder2002trivalent} and Conder, Malni{\v{c}}, Maru{\v{s}}i{\v{c}}, and Poto{\v{c}}nik \cite{conder2006census}.
These lists have recently been extended to higher orders and 
conveniently summarized by Conder and Poto{\v{c}}nik in \cite{conder2025edge}. 
Our calculations were run on a personal computer and only required several hours.
Additionally, we have also verified, using nauty \cite{mckay2014practical}, that no cubic graph on at most $20$ vertices satisfies the conclusion of Lemma~\ref{Lemma: Every Distance Can be Avoided}.

This leads to the following natural questions.

\begin{question}
    Do there exist infinitely many $3$-edge colourable cubic graphs $G$ such that for all edges $(u,v),(x,y) \in E(G)$, 
    $$
    \alpha(G) = \alpha(G - \{u,v,x,y\})?
    $$
\end{question}

\begin{question}
    Does there exist a $d$-regular $d$-edge colourable graph $G$ where $d\geq 4$ such that for all edges $(u_1,v_1),\dots, (u_{d-1}, v_{d-1}) \in E(G)$, 
    $$
    \alpha(G) = \alpha(G - \{u_1,\dots,u_{d-1}, v_1,\dots,v_{d-1} \})?
    $$
\end{question}

Despite our efforts to find a large number of counterexamples, we have only found two.
That is to say, the Lov\'{a}sz conjecture appears true for most line hypergraphs of cubic graphs.
Given this, does there exist a slightly weaker version of the conjecture that may be true?
We propose the following weakening, which still
 implies Ryser's conjecture.
We have verified Conjecture~\ref{Conjecture: weak version} holds for the line hypergraph of the Biggs-Smith graph.

\begin{conjecture}\label{Conjecture: weak version}
    For every $r$-partite $r$-uniform hypergraph $\mathcal{H}$ there exists some $k$ with $1\leq k\leq r-1$ such that $\mathcal{H}$
    contains $k(r-1)$ vertices whose deletion reduces the matching number by at least $k$.
\end{conjecture}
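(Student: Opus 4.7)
The plan is an induction on the matching number $\nu=\nu(\mathcal{H})$: if in every nonempty $r$-partite $r$-uniform hypergraph one can produce just one witness set of $k(r-1)$ vertices (for some $k\in\{1,\dots,r-1\}$) whose deletion drops $\nu$ by at least $k$, then iterating this step yields a vertex cover of size at most $(r-1)\nu$, recovering Ryser's bound. Thus the conjecture reduces to exhibiting a single such witness per hypergraph, with the freedom to choose any admissible $k$.

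For $r=2$ the statement is immediate from K\H{o}nig's theorem with $k=1$. For $r=3$, I would invoke Aharoni's theorem $\tau\le 2\nu$ together with the Haxell--Narins--Szab\'{o} characterization: when $\tau=2\nu$, those authors already proved that Lov\'{a}sz's conjecture holds, so $k=1$ suffices. The genuinely new regime is $\tau<2\nu$ combined with the failure of Lov\'{a}sz's conjecture, as exemplified by $\mathcal{L}(\BS)$; here one must locate a $k=2$ witness, i.e.\ four edges of $\BS$ whose endpoint set $V_S$ (of size at most eight) satisfies $\alpha(\BS\setminus V_S)\le 41$. Using the symmetry of $\BS$ and the level-set/geodesic machinery developed in Section~\ref{sec:BSG}, a short search over small configurations (for instance four edges concentrated in a single double-star $H_i$, or a short arc of the $a$-cycle) should yield such a quadruple; this is also readily amenable to the computer verification the authors indicate they have performed.

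The main obstacle is the case $r\geq 4$. Since the conjecture implies Ryser's --- open for over fifty years --- any complete proof would constitute a major breakthrough in its own right. A more tractable intermediate programme is to verify the conjecture on the families where Ryser is already known: the $\nu=1$ case for $r\leq 5$ (Tuza), linear intersecting hypergraphs for $r\leq 9$ (Franceti\'{c}--Herke--McKay--Wanless), and $t$-intersecting hypergraphs (Bustamante--Stein, Kir\'{a}ly--T\'{o}thm\'{e}r\'{e}sz); in each setting the covers of size at most $(r-1)\nu$ produced in the original proofs can be re-examined in order to extract an appropriate $k(r-1)$-witness. A further potentially useful tool is F\"uredi's bound $\tau^*\le(r-1)\nu$: starting from an extreme fractional cover of this value, one would isolate an integral chunk of $k(r-1)$ vertices whose deletion forces an \emph{integral} drop of $k$ in $\nu$. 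Converting a fractional decrease into a genuine integer decrease of the matching number is precisely where I expect the central difficulty to lie, and it is likely where fundamentally new ideas will be required.
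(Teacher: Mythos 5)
This statement is a \emph{conjecture} in the paper, not a theorem: the authors offer no proof of it, only the remark that they have verified it computationally for the single hypergraph $\mathcal{L}(\BS)$. Your proposal is likewise not a proof, and you should be explicit about that. The opening reduction (one witness per hypergraph, iterated, recovers Ryser's bound) is just the observation that the conjecture implies Ryser's conjecture; it does not help establish the conjecture itself. The genuine gaps are as follows. For $r=3$ you split into $\tau=2\nu$ (where Haxell--Narins--Szab\'o give a $k=1$ witness) and $\tau<2\nu$, but in the latter regime you only discuss the two \emph{known} counterexamples to Lov\'asz's conjecture. The conjecture quantifies over \emph{every} $3$-partite $3$-uniform hypergraph, so you would need an argument that any hypergraph with $\tau<2\nu$ admitting no $k=1$ witness must admit a $k=2$ witness --- and no such argument is known or sketched. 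Even for $\mathcal{L}(\BS)$ you do not exhibit the four edges of $\BS$ whose eight endpoints drop $\alpha$ by two; ``a short search should yield such a quadruple'' is a plan, not a proof.

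For $r\geq 4$ you concede that nothing is proved and that a complete argument would be a major breakthrough, which is correct: the statement implies Ryser's conjecture, open for all $r\geq 4$. The suggestion to extract integral witnesses from F\"uredi's fractional bound $\tau^*\leq(r-1)\nu$ runs into exactly the obstacle you name --- a fractional cover of size $k(r-1)$ need not decrease the integral matching number by $k$ when deleted, and there is no known mechanism for rounding here. In short, the proposal is a reasonable research outline that correctly locates the difficulties, but it does not constitute a proof of the statement, and no proof exists in the paper to compare it against.
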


Significantly, the Biggs-Smith graph and F168D are not Cayley graphs.
Hence, the only known counterexamples to the Lov\'{a}sz conjecture are both line hypergraphs of highly symmetric graphs that are not Cayley graphs.
While it is easy to see why symmetry in a prospective counterexample is helpful (at least in identifying it as a counterexample), it does not seem obvious to the authors why not being a Cayley graph is beneficial.

\begin{question}
    Is it true that
    if $\mathcal{L}$ is the line hypergraph of an $r$-regular $r$-edge colourable Cayley graph, then
    $\mathcal{L}$ contains $r-1$ vertices whose deletion reduces its matching number?
\end{question}

\section*{Acknowledgments}
This research began at the Banff International Research Station workshop on New Perspectives in Colouring and Structure in October 2024. The authors would like to thank Maya Stein, Youri Tamitegama and Jane Tan for insightful initial discussions about this problem.

\bibliographystyle{plain}
\bibliography{bib}

\begin{thebibliography}{10}

\bibitem{aharoni2001ryser}
Ron Aharoni.
\newblock {R}yser's conjecture for tripartite 3-graphs.
\newblock {\em Combinatorica}, 21(1):1--4, 2001.

\bibitem{aharoni2000hall}
Ron Aharoni and Penny Haxell.
\newblock Hall's theorem for hypergraphs.
\newblock {\em Journal of Graph Theory}, 35(2):83--88, 2000.

\bibitem{Biggs_1973}
Norman Biggs.
\newblock Three remarkable graphs.
\newblock {\em Canadian Journal of Mathematics}, 25(2):397–411, 1973.

\bibitem{Biggs_AGT}
Norman Biggs.
\newblock {\em Algebraic Graph Theory}.
\newblock Cambridge Mathematical Library. Cambridge University Press, Cambridge, second edition, 1993.

\bibitem{biggs1971trivalent}
Norman Biggs and Donald Smith.
\newblock On trivalent graphs.
\newblock {\em Bulletin of the London Mathematical Society}, 3(2):155--158, 1971.

\bibitem{bishnoi2021ryser}
Anurag Bishnoi, Shagnik Das, Patrick Morris, and Tibor Szab{\'o}.
\newblock {R}yser's conjecture for t-intersecting hypergraphs.
\newblock {\em Journal of Combinatorial Theory, Series A}, 179:105366, 2021.

\bibitem{brouwer1989distance}
Andries~E. Brouwer, Arjeh~M. Cohen, and Arnold Neumaier.
\newblock {\em Distance-Regular Graphs}.
\newblock Springer-Verlag, Berlin, 1989.

\bibitem{bustamante2018monochromatic}
Sebasti{\'a}n Bustamante and Maya Stein.
\newblock Monochromatic tree covers and {R}amsey numbers for set-coloured graphs.
\newblock {\em Discrete Mathematics}, 341(1):266--276, 2018.

\bibitem{conder2002trivalent}
Marston Conder and Peter Dobcs{\'a}nyi.
\newblock Trivalent symmetric graphs on up to 768 vertices.
\newblock {\em Journal of Combinatorial Mathematics and Combinatorial Computing}, 40:41--64, 2002.

\bibitem{conder2006census}
Marston Conder, Aleksander Malni{\v{c}}, Dragan Maru{\v{s}}i{\v{c}}, and Primo{\v{z}} Poto{\v{c}}nik.
\newblock A census of semisymmetric cubic graphs on up to 768 vertices.
\newblock {\em Journal of Algebraic Combinatorics}, 23:255--294, 2006.

\bibitem{conder2025edge}
Marston Conder and Primo{\v{z}} Poto{\v{c}}nik.
\newblock Edge-transitive cubic graphs: Cataloguing and enumeration.
\newblock {\em arXiv:2502.02250}, 2025.

\bibitem{francetic2017ryser}
Nevena Franceti{\'c}, Sarada Herke, Brendan~D McKay, and Ian~M Wanless.
\newblock On {R}yser’s conjecture for linear intersecting multipartite hypergraphs.
\newblock {\em European Journal of Combinatorics}, 61:91--105, 2017.

\bibitem{furedi1981maximum}
Zolt{\'a}n F{\"u}redi.
\newblock Maximum degree and fractional matchings in uniform hypergraphs.
\newblock {\em Combinatorica}, 1(2):155--162, 1981.

\bibitem{furedi1988matchings}
Zolt{\'a}n F{\"u}redi.
\newblock Matchings and covers in hypergraphs.
\newblock {\em Graphs and Combinatorics}, 4(1):115--206, 1988.

\bibitem{haxell2018extremal}
Penny Haxell, Lothar Narins, and Tibor Szab{\'o}.
\newblock Extremal hypergraphs for {R}yser's conjecture.
\newblock {\em Journal of Combinatorial Theory, Series A}, 158:492--547, 2018.

\bibitem{haxell2012ryser}
Penny Haxell and Alex Scott.
\newblock On {R}yser's conjecture.
\newblock {\em {E}lectronic {J}ournal of {C}ombinatorics}, pages P23.1--P23.10, 2012.

\bibitem{henderson1971permutation}
John~Robert Henderson.
\newblock {\em Permutation decomposition of (0, 1)-matrices and decomposition transversals}.
\newblock PhD thesis, California Institute of Technology, 1971.

\bibitem{lovasz1974minimax}
L{\'a}szl{\'o} Lov{\'a}sz.
\newblock Minimax theorems for hypergraphs.
\newblock In {\em Hypergraph Seminar}, pages 111--126. Springer, 1974.

\bibitem{lovasz1975minimax}
L{\'a}szl{\'o} Lov{\'a}sz.
\newblock On minimax theorems of combinatorics ({D}octoral thesis, in {H}ungarian).
\newblock {\em Math. Lapok}, 26(3-4):209--264, 1975.

\bibitem{mckay2014practical}
Brendan~D McKay and Adolfo Piperno.
\newblock Practical graph isomorphism, {II}.
\newblock {\em Journal of Symbolic Computation}, 60:94--112, 2014.

\bibitem{tuza1979some}
Zsolt Tuza.
\newblock Some special cases of {R}yser’s conjecture.
\newblock {\em Unpublished manuscript}, 1979.

\bibitem{tuza1983ryser}
Zsolt Tuza.
\newblock {R}yser’s conjecture on transversals of r-partite hypergraphs.
\newblock {\em Ars Combinatoria}, 16:201--209, 1983.

\end{thebibliography}


\appendix

\section{Appendix}

In the appendix, we prove some of the properties of the Biggs-Smith graph that have been used in the paper. We start with the proof of 
Lemma~\ref{lem:acycle} that states that every path of length at most 7 on the $a$-cycle is a geodesic.
For the proof it is helpful to consult Figures~\ref{Fig:Basic Biggs-Smith} and~\ref{fig:I1}. Recall also that
the vertex set of $\BS$ is partitioned into sets $H_1,\dots, H_{17}$ where $H_i = \{ia,ib,ic,id,ie,if\}$, see Figure~\ref{fig:I1}.
Each edge of the $a$-cycle joins a vertex  $ia\in H_i$ to the $a$-vertex of $H_{i \pm 1}$.
Similarly, each edge of the $b$, $c$, and $d$-cycles joins a vertex of $H_i$ to a vertex in $H_{i \pm 4}$, $H_{i \pm 2}$, and $H_{i \pm 8}$, respectively (where all indices $i\pm k$ are taken modulo 17).

\begin{proof}[Proof of Lemma~\ref{lem:acycle}]

  By Theorem~\ref{thm:BSprops}(3) ($\BS$ has girth 9) it is immediate that every path in $\BS$ of length at most 4 is a geodesic.
  
  For any pair of vertices $x$ and $y$ on the $a$-cycle, if a path $P$ from $x$ to $y$  contains a vertex of the $b$-cycle, then to reach the $b$-cycle from $x$ and return to $y$, $P$ must also contain at least two vertices $ie$ and $je$ for some $j\not=i$, and at least two vertices of the $b$-cycle (see Figure~\ref{fig:double}). Hence $P$ has length at least 5. Thus any path on the $a$-cycle of length 5 is a geodesic. Similarly, if $P$ contains a vertex of the $c$-cycle or the $d$-cycle, then $P$ must also contain at least four vertices $ie,if$ and $je,jf$ for some $j\not=i$, 
  two vertices $kf$ and $\ell f$ with $k\not=\ell$ (where $k=i$ and $\ell=j$ unless $P$ also visits yet another cycle), 
  and at least two vertices of the $c$-cycle or $d$-cycle. This implies $P$ has length at least 7, implying that we may disregard any such paths from now on.

  Next we consider a path on the $a$-cycle of length 6 (or 7). By symmetry of $\BS$, we may assume this path is $P=1a,2a,\dots,7a$ (or $P=1a,2a,\dots,8a$). By the conclusions in the previous paragraph, any path $Q\ne P$ of length at most 6 from $1a$ to $7a$ (or $8a$) must contain two subpaths of length 2 from the $a$-cycle to the $b$-cycle and back. These two paths are of the form $ia,ie,ib$ and $jb,je,ja$ (where $i\ne j$).
  In addition, $Q$ can contain at most 2 edges on the $a$-cycle or the $b$-cycle. As previously noted, each of these additional edges crosses between a set $H_k$ and a set 
  $H_{k\pm 1}$ or $H_{k\pm 4}$.
  The path $Q$ begins at $1a\in H_1$, while it ends at $7a\in H_7$ (or $8a\in H_8$).
  Since neither 6 nor 7 can be written as the sum of at most two elements of $\{\pm1,\pm4\}$ (modulo 17), we see that $Q$ cannot start in $H_1$ and reach $H_7$ (or $H_8$) using at most two edges on the $a$-cycle and the $b$-cycle. 
  This shows that $P$ is a geodesic.  
\end{proof}

For the proof of Corollary~\ref{cor:mergingpaths}, the reader should consult Figure~\ref{fig:vertex-levels}.

\begin{proof}[Proof of Corollary~\ref{cor:mergingpaths}]
  For both parts, first note that by Lemma~\ref{Lemma: Vertex Level-sets}(2), all the paths in question are uniquely determined. Also, by Lemma~\ref{Lemma: BS Edge-Pair Transitive}(1), for any other edge $(x',y')$ in the same level set as $(x,y)$, there exists an automorphism $\phi$ of $\BS$ with $\phi(x)=x'$, $\phi(y)=y'$, and $\phi(u)=u$. Hence for (1) we know that $\phi$ maps the $(x,u)$-geodesic to the  $(x',u)$-geodesic, and the $(y,u)$-geodesic to the  $(y',u)$-geodesic. Similarly for (2) we find $\phi$ maps the 4-displaced paths from $x\in D_6(u)$ and from $y\in D_6(u)$ to the 4-displaced paths from $x'\in D_6(u)$ and from $y'\in D_6(u)$, respectively.
  Hence to verify (1) and (2) it suffices to prove each case for one choice of $(x,y)$ and $u$. For both parts we choose $u=17a$, and $(x,y)=(ia,ie)$, where $4\leq i\leq 6$ for (1), and just $i=6$ for (2). By Lemma~\ref{Lemma: Vertex Level-sets}(6), since $17a,1a,2a,3a,4a,5a,6a,7a$ is a geodesic (by Lemma~\ref{lem:acycle}) it follows that $(ia,ie)\in D_i(17a)$.

 For (1)
  when $i=4$, the path $4e,4b,17b,17e,17a$ has length 4 and is thus the unique $(4e,17a)$-geodesic. It meets the $(4a,17a)$-geodesic $4a,3a,2a,1a,17a$ only at $17a$.

  For $i=5$, the path $5e,5b,1b,1e,1a,17a$ has length 5 and is thus the unique $(5e,17a)$-geodesic. It first meets the $(5a,17a)$-geodesic $5a,4a,3a,2a,1a,17a$  at $1a\in D_1(17a)$.

  When $i=6$, the path $6e,6b,2b,2e,2a,1a,17a$ has length 6 and is thus the unique $(6e,17a)$-geodesic. It first meets the $(6a,17a)$-geodesic $6a,5a,4a,3a,2a,1a,17a$  at $2a\in D_2(17a)$. This completes the proof of (1).

  Now we consider (2). Using the geodesics determined in the proof of (1) we see that the 4-displaced path from $6a$ is $6a,5a,4a,4e,4b,17b,17e,17a$. Since $6e,6b,2b,2e,2a,1a,17a$ is a geodesic (as found in the proof of (1)) we know by Lemma~\ref{Lemma: Vertex Level-sets}(6) that the edge $(2b,15b)$ is in $D_4(17a)$. Then since the path $15b,15e,15a,16a,17a$ has length 4, we know that it is the $(15b,17a)$-geodesic.   Hence, the 4-displaced path from $6e$ is $6e,6b,2b,15b,15e,15a,16a,17a$, which meets the 4-displaced path from $6a$ only at $17a$. Thus, (2) is proved.
\end{proof}

We conclude by providing a proof that $\alpha(\BS) = 43$ which uses computer assistance only for a very simple task. The proof shows how tight this result is. The proof will involve automorphisms of $\BS$ that preserve the partition into the six sets $A,B,C,D,E,F$ of the $H$-representation of the graph, where $A=\{1a,\dots, 17a\}$, $B=\{1b,\dots,17b\}$, etc. We will refer to these maps simply as "symmetries". Below we use the notation $[17]=\{1,\ldots,17\}$.

\begin{proposition}\label{prop:symmetries}
    The automorphisms of $\BS$ that preserve the partition of $V(\BS)$ into sets $A,B,C,D,E,F$ of the $H$-representation of $\BS$ correspond to permutations of the sets 
    $H_i = \{ia,ib,ic,id,ie,if\}$, $i\in [17]$. They are in bijective correspondence with permutations of $[17]$ that can be written uniquely as the product of a dihedral permutation $\alpha$ of $[17]$ and a permutation $\phi_k: i\mapsto ki\pmod{17}$, where $k = 1,2,4,8$, and $0 \pmod{17}$ is written as $17$. The product $\alpha\phi_k$ preserves each part of the partition if $k=1$, interchanges $A$ with $B$ (and $C$ with $D$) if $k=4$, and interchanges $E$ with $F$ (and $\{A,B\}$ with $\{C,D\}$) if $k=2$ or $8$.
\end{proposition}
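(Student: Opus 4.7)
The plan is to identify each partition-preserving automorphism $\phi$ with a pair $(\pi, \sigma)$ where $\pi$ is a permutation of $[17]$ and $\sigma$ is a permutation of $\{A,B,C,D,E,F\}$. First, I observe that the only edges of $\BS$ with both endpoints in $E \cup F$ are the $17$ internal edges $(ie, if)$. Hence $\phi$ must send $\{E, F\}$ to itself setwise, and each edge $(ie, if)$ must map to some $(je, jf)$; this defines $\pi$ uniquely via $\phi(H_i) = H_{\pi(i)}$. Examining the remaining intra-cell edges $(ia, ie), (ib, ie), (ic, if), (id, if)$, I conclude that $\sigma$ either fixes $E, F$ and preserves each of $\{A, B\}$ and $\{C, D\}$ setwise, or swaps $E \leftrightarrow F$ and interchanges $\{A, B\}$ with $\{C, D\}$.

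Next I translate the inter-cell constraints into an arithmetic condition on $\pi$. The $a$-, $b$-, $c$-, $d$-cycles correspond to step offsets $\pm 1, \pm 4, \pm 2, \pm 8 \pmod{17}$, respectively; write $s_A = 1$, $s_B = 4$, $s_C = 2$, $s_D = 8$. Preserving the $a$-cycle requires $\pi(i+1) - \pi(i) \in \{\pm s_{\sigma(A)}\} \pmod{17}$ for each $i$; since these signed steps must sum to $0 \pmod{17}$ over the full cycle and $17$ is prime, all the signs must agree. Hence $\pi(i) = ai + b$ for some $b \in \mathbb{Z}/17$ and some $a \in \pm\{1, 2, 4, 8\}$, with analogous computations for the other three cycles forcing the same $a$ (after $\sigma$ is applied to the respective color).

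The final step is the factorization and the identification of $\sigma$. Since $2$ has multiplicative order $8$ in $(\mathbb{Z}/17)^*$, the set $\pm\{1, 2, 4, 8\} = \{1, 2, 4, 8, 9, 13, 15, 16\}$ is precisely the subgroup generated by $2$, and each of its elements has a unique representation $a = \varepsilon k$ with $\varepsilon \in \{\pm 1\}$ and $k \in \{1, 2, 4, 8\}$. Writing $\alpha(j) = \varepsilon j + b$ yields the unique factorization $\pi = \alpha \circ \phi_k$ with $\alpha \in D_{17}$. The color action $\sigma$ depends only on $k$: tracking where each step offset is sent by multiplication by $k$ (for instance, $k = 2$ sends $\pm 1 \mapsto \pm 2$, so $\sigma(A) = C$, and $\pm 4 \mapsto \pm 8$ gives $\sigma(B) = D$, etc.) recovers exactly the three cases in the statement. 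For the converse direction, each pair $(\alpha, \phi_k)$ defines a genuine automorphism by sending $ix$ to the vertex of $H_{\alpha(ki)}$ whose color letter is $\sigma$ applied to the color of $x$; verification is immediate once one knows that intra-cell edges map correctly and that each step $\pm s_X$ maps to $\pm s_{\sigma(X)}$ under multiplication by $a = \varepsilon k$. The main obstacle throughout is the bookkeeping needed to keep $a$, $\sigma$, and the step multiplication consistent; the heart of the argument is the observation that $\pm\{1, 2, 4, 8\}$ is multiplicatively closed in $(\mathbb{Z}/17)^*$, which makes all compatibility checks routine.
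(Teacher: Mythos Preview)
Your proof is correct and follows essentially the same approach as the paper, which also identifies the partition-preserving automorphisms with permutations of $[17]$ acting on the $H_i$ via the step offsets $1,4,2,8$. The paper's argument is considerably terser---it verifies that $D_{17}$ and the $\phi_k$ yield automorphisms and then leaves ``the rest of the proof'' to the reader---whereas you supply the necessity direction (showing $\pi$ must be affine via the telescoping-sum argument over the $a$-cycle) and the uniqueness of the factorization $\alpha\phi_k$, both of which the paper omits.
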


\begin{proof}
    The automorphisms of $\BS$ that preserve the $H$-representation and preserve the partition into sets $A,B,\dots,F$ are obtained by permuting the $H$-sets $H_i$, $i\in[17]$.
    Thus, these automorphisms correspond to permutations of the set [17].
    Clearly, the dihedral permutations of [17] preserve the $a$-, $b$-, $c$-, and $d$-cycles. 
    The permutation $\phi_k$ of [17] acting on the index of the sets $H_i$
    changes the $a$-cycle into a cycle with step $k$ and changes the $b$-cycle into the cycle with step $4k$ (which is the $a$-cycle if $k=4$, the $d$-cycle if $k=2$, and the $c$-cycle if $k=8$).
    The rest of the proof is apparent, and the details are left to the reader. 
\end{proof}

\begin{proposition}
    $\alpha(\BS)=43$.
\end{proposition}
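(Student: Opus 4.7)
The lower bound $\alpha(\BS) \geq 43$ is witnessed by the set $I_1$ shown in Figure~\ref{fig:I1}. For the matching upper bound, I plan to encode an arbitrary independent set $I \subseteq V(\BS)$ by $0/1$-variables $a_i, b_i, c_i, d_i, e_i, f_i$ indicating the membership of $ia, ib, ic, id, ie, if$ in $I$, and set $A = \sum a_i$ and analogously $B, C, D, \alpha' = \sum e_i, \beta' = \sum f_i$; also let $S_x = \{i : x_i = 1\}$. The star edges inside each $H_i$ yield the disjointness $S_e \cap (S_a \cup S_b \cup S_f) = \emptyset$ and $S_f \cap (S_c \cup S_d) = \emptyset$, while the four 17-cycles (with steps $1, 4, 2, 8$ in the $H$-representation) force $S_a, S_b, S_c, S_d$ to be independent sets of $C_{17}$, and in particular $A, B, C, D \leq 8$.

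Setting $k_1 = |S_a \cap S_b|$ and $k_2 = |S_c \cap S_d|$, the disjointness constraints deliver $|S_e| \leq 17 - (A + B - k_1)$, $|S_f| \leq 17 - (C + D - k_2)$, and $|S_e| + |S_f| \leq 17$. A short bookkeeping combines these into the two bounds
$$|I| \;\leq\; (A+B+C+D) + 17 \qquad\text{and}\qquad |I| \;\leq\; 34 + k_1 + k_2.$$
The intersection $S_a \cap S_b$ is simultaneously step-$1$- and step-$4$-independent, so $k_1 \leq \alpha(C_{17}(1,4))$; by the scaling automorphism $i \mapsto 2i$ of Proposition~\ref{prop:symmetries} (which sends the step-$1$ cycle to the step-$2$ cycle and the step-$4$ cycle to the step-$8$ cycle), $C_{17}(1,4) \cong C_{17}(2,8)$, so the same bound applies to $k_2$. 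A short computer search, the sole computational step of the argument, confirms $\alpha(C_{17}(1,4)) = 6$.

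These two bounds give $|I| \leq \min(A+B+C+D+17,\; 34+k_1+k_2)$, and $|I|\geq 44$ would require simultaneously $A+B+C+D \geq 27$ and $k_1+k_2 \geq 10$. The plan is to rule this out by a finer structural analysis of the allowed pairs $(S_a, S_b)$ (and symmetrically $(S_c, S_d)$). The key fact is that when $k_1$ is close to $6$, the set $S_a$ itself cannot reach its maximum size $8$: every maximum $8$-element independent set of $C_{17}$ contains at most $4$ pairwise step-$4$-non-adjacent elements, which yields an upper bound on $A+B$ as an explicit decreasing function of $k_1$, and symmetrically for $C+D$ in terms of $k_2$. These auxiliary bounds are strong enough to reduce the situation to a very short list of admissible $(k_1, k_2)$ patterns.

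The remaining step, which I expect to be the main obstacle, is to exclude these few patterns. Here I would use that $|I|=44$ forces $|S_e|+|S_f|=17$, hence $(S_a\cup S_b)\cup(S_c\cup S_d)=[17]$; so $S_c$ (a step-$2$ independent set) and $S_d$ (a step-$8$ independent set) must essentially live in the complement $[17]\setminus(S_a\cup S_b)$. Normalizing $S_a\cap S_b$ under the $H$-preserving action of Proposition~\ref{prop:symmetries}, only a handful of configurations remain, and I expect each to fail because the residual set carries too much step-$2$/step-$8$ structure to admit $S_c, S_d$ of the required size $7$. Checking this last step cleanly — and, if needed, defering a tiny finite enumeration to the computer — is the most delicate part of the proof; with it in place, $|I|\leq 43$ follows.
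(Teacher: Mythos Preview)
Your setup and initial inequalities coincide with the paper's: your sets $S_x$ are the paper's $J_x$, your two bounds $|I|\le A+B+C+D+17$ and $|I|\le 34+k_1+k_2$ are exactly the paper's observations~(3)--(4), and your $k_1\le 6$ is the paper's Claim~2 (which the paper proves by a short double-counting argument rather than a computer search, incidentally). So the framework is right and matches the paper's.

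The gap is in the endgame, where two of your deductions fail. First, ``$|I|=44$ forces $|S_e|+|S_f|=17$'' is not justified: from $|I|=A+B+C+D+|S_e|+|S_f|$ and $|S_e|+|S_f|\le 17$ you only get $A+B+C+D\ge 27$, and since each of $A,B,C,D$ can be up to $8$ there is no reason for equality. Second, even granting $|S_e|+|S_f|=17$ (so $S_e\sqcup S_f=[17]$), what follows is $S_a\cup S_b\subseteq S_f$ and $S_c\cup S_d\subseteq S_e$, hence $S_c\cup S_d\subseteq [17]\setminus(S_a\cup S_b)$, but \emph{not} $(S_a\cup S_b)\cup(S_c\cup S_d)=[17]$. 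Your subsequent plan (fit $S_c,S_d$ of ``the required size $7$'' into the complement) therefore starts from an unproven premise, and the target sizes for $C,D$ are not pinned down either.

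The paper's proof shows that this endgame is in fact the bulk of the work, and does not collapse as cleanly as you hope. It introduces a notion of \emph{left-maximality} (no element of $S_e$ can be swapped into $S_a$ or $S_b$), splits on the value of $j_0:=A+B+|S_e|\in\{21,22,23\}$, and for each value classifies the possible triples $(S_a,S_b,S_e)$ up to the $H$-preserving automorphisms of Proposition~\ref{prop:symmetries}, producing two explicit tables with roughly two dozen rows in total. The contradiction is obtained at the very end by checking (this is the paper's sole computer step) that for every pair of admissible cases the resulting $S_e$ and $S_f=\phi_2(S_e')$ intersect under all symmetries, violating $S_e\cap S_f=\emptyset$. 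So your instinct that the final check is ``the most delicate part'' is correct, but the analysis needed is substantially longer than your sketch suggests, and the obstruction ultimately used is the disjointness of $S_e$ and $S_f$ rather than a covering condition on $S_a\cup S_b\cup S_c\cup S_d$.
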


\begin{proof}
    It is easy to verify that the set $I_1$ given in Figures~\ref{fig:double} and ~\ref{fig:I1} is an independent set of order $43$ in $\BS$. Hence, $\alpha(\BS) \geq 43$.

    For a graph $G$ we denote by $\cI(G)$ the set of all independent sets in $G$.


    For $J\in\cI(\BS)$, we let $J_a$ be the set of all $i \in [17]$ such that $ia \in J$ and let $j_a = |J_a|$.
    Define $J_b,J_c,J_d,J_e,J_f$ and $j_b,j_c,j_d,j_e,j_f$ analogously with respect to the sets $B,C,D,E,F$.
    Now, given the $H$-structure of $\BS$, it is clear that
    \begin{enumerate}
        \item[(1)] $J_e\cap J_f = \emptyset$, $J_e \cap (J_a\cup J_b) = \emptyset$, and $J_f \cap (J_c\cup J_d) = \emptyset$.
    \end{enumerate}
    The following inequalities are implied, or trivial in the case of (2):
    \begin{enumerate}
        \item[(2)] for all $x \in \{a,b,c,d\}$, $j_x\leq 8$,
        \item[(3)] $j_e + j_f \leq 17$,
        \item[(4)] $j_0 := j_a + j_b + j_e \leq 17 + |J_a \cap J_b|$, and $j_1 := j_c + j_d + j_f\leq 17 + |J_c \cap J_d|$.
    \end{enumerate}

We write $\BS_0$ for the subgraph of $\BS$ induced by $A\cup B\cup E$, and similarly $\BS_1=\BS[C\cup D\cup F]$. 
We begin our proof by establishing some basic properties of the elements $J\in\cI(\BS_0)$. Note that for such sets $|J|=j_0$. By symmetry these facts will have analogues for $\cI(\BS_1)$.

For $J\in\cI(\BS_0)$ and two sets $Q,Q'\subseteq [17]$ where each is contained in one of $J_a,J_b,J_e$, we will say that $Q$ and $Q'$ are \emph{equivalent}, $Q\sim Q'$, if one can be obtained from the other by applying an automorphism of $\BS$ which maps $E = \{e_1,\dots, e_{17}\}$ to itself. 
By Proposition \ref{prop:symmetries}, these automorphisms correspond to the action of the dihedral group ${\mathcal{D}}_{17}$ (which also preserves $A$ and $B$) together with the permutation $\phi_4: [17]\to [17]$ that corresponds to the mapping $i\mapsto 4i \pmod{17}$ (which exchanges $A$ and $B$) such that $0 \pmod{17}$ is written as $17$.

\medskip
\underline{Claim 1:} If $J\in\cI(\BS_0)$ and $j_b = 8$, then $j_0 \leq 21$.
\medskip

    Suppose that $j_b = 8$. Up to symmetries that fix the $b$-cycle, we may assume that $$J_b = \{1,9,17,8,16,7,15,6\}.$$
    As the $a$-cycle has step length $1$, and $i,j \in J_a$ implies $ia,ja$ are not adjacent on the $a$-cycle, it is trivial to verify that $|J_a \cap J_b| \leq 4$. Hence, $j_0 \leq 17 + |J_a \cap J_b| \leq 21$ as claimed. \hfill $\diamond$

\bigskip

Note for example that due to symmetries, Claim 1 implies that if $j_a\ge8$ then $j_0\le21$. 

In the upcoming arguments we often consider the sets $J_a\cap J_b$. Such sets are independent sets in the circulant graph $C(17,\{1,4\})$ that has vertex set $[17]$ in circular order, with edges $ij$ if and only if $|i-j|\in \{1,4\}$ (the difference taken modulo $17$), i.e. the ``superposition" of the $a$-cycle and the $b$-cycle. When considering $J_a\cap J_b$, it is helpful to keep in mind that the circular distance in $[17]$ from one vertex in $J_a\cap J_b$ to the next is never 1 or 4, and also two consecutive such distances cannot both be 2. We write $j_{ab}=|J_a\cap J_b|$.

\medskip
\underline{Claim 2:} For $J\in\cI(\BS_0)$ we have $j_{ab}\le6$, and hence $j_0\le23$.
\medskip

    Take any interval $P$ of 5 consecutive vertices on the $a$-cycle. It is clear that $|P\cap (J_a\cap J_b)|\le2$. If we count the number of pairs $(P,v)$, where $v\in P\cap (J_a\cap J_b)$, we obtain at most 34 such pairs. But each $v\in J_a\cap J_b$ belongs to 5 such intervals, thus $5|J_a\cap J_b|\le34$. This shows that $j_{ab} < 7$. Finally, (4) implies that $j_0\le23$.
    \hfill $\diamond$

\bigskip


We say that $J\in \cI(\BS_0)$ is \emph{left-maximal} if it has the following properties: 

\begin{itemize}
    \item for each $x\in (A\cup B)\setminus J$, the set $J\cup\{x\}$ is not independent, and 
    \item whenever $i\in J_e$, either $i-1$ or $i+1$ belongs to $J_a$ (otherwise we could replace $ie$ with $ia$) and either $i+4$ or $i-4\pmod{17}$ is in $J_b$ (since otherwise we could replace $ie$ with $ib$).
\end{itemize}
 
    
\medskip
\underline{Claim 3:} Suppose $J\in\cI(\BS_0)$ is left-maximal and $j_{ab} = 6$. Then the following hold.
\begin{itemize}
\item $j_0\leq j_e+13$.
    \item If $j_0=23$, then $j_e=10$, $J_e\sim \{2,4,5,7,9,10,12,\allowbreak 14,16,17\}$ and $J_a$ and $J_b$ are equivalent to $\{1,3,6,8,11,13,15\}$ and $\{1,3,6,8,11,13\}$.
    \item If $j_0=22$, then $J_a$ and $J_b$ are still equivalent to $\{1,3,6,8,11,13,15\}$ and $\{1,3,6,8,11,13\}$, and $J_e$ is as above with one of its elements removed.
\end{itemize} 
\medskip 
It is straightforward to verify that $J_{ab}=J_a\cap J_b$ is equivalent to $\{1,3,6,8,11,13\}$  or $\{1,3,6,9,12,15\}$ (using dihedral symmetries). However, these two sets are equivalent (by composing dihedral symmetries and the permutation $\phi_4$). So, we may assume that we have the former one. 
It is easy to see that one can only add element 15 or (exclusive) 16 to $J_a$ if $J_{ab}=\{1,3,6,8,11,13\}$. Note that these two choices differ by dihedral symmetry. 
The reader can also verify that no elements may be added to $J_b$. Hence $j_a+j_b\leq 13$, and so $j_0=j_a+j_b+j_e\leq 13+j_e$.

If $j_0=23$ then $j_e\geq10$. From the previous paragraph we may assume $J_e\subseteq \{2,4,5,7,9,10,\allowbreak 12,14,15,16,17\}$, so if $j_e=11$ then this is the whole set $J_e$. But then $J$ is not left-maximal since 15 or 16 could be moved to $J_a$. Hence $j_e=10$,  $J_a\sim\{1,3,6,8,11,13,15\}$, and $J_b=J_{ab}$.
This is the only way to obtain $j_0=23$ up to symmetries.

If $j_0=22$, we may reach the same conclusion as above with one of the elements $x$ from $J_e$ removed (see case 6.1 in Table~\ref{table:jab=5}). 
Otherwise, $J_a=J_b \sim \{1,3,6,8,11,13\}$ and $J_e\subseteq \{2,4,5,7,9,10,12,14,15,16,17\}$, in which case again $J$ is not left-maximal, since 15 or 16 could be moved to $J_a$. Hence this 
possibility cannot occur.
        \hfill $\diamond$

    \medskip
    \underline{Claim 4:} Suppose $J\in\cI(\BS_0)$ has size $j_0 = 22$. Then $|J_a\cap J_b| \ge 5$ and $j_e\ge 8$. Moreover, if $J$ is left-maximal and $|J_e|<10$, then $J_e$ is equivalent to one of the cases shown in Table \ref{table:jab=5}.  
    \medskip

\begin{table}[ht]
    \centering
    \begin{tabular}{|c|c|c|c|l|c|}
        \hline
        Case & $J_a\cap J_b$ & $J_a\setminus J_b$ & $J_b\setminus J_a$ & $J_e$ & $j_e$ \\ 
        \hline
        6.1 & 1 3 6 8 11 13 & 15 & -- & $\{2,4,5,7,9,10,12,14,16,17\}\setminus\{x\}$ & 9 \\ 
        \hline
        5.1 & 1 3 6 8 11 & 13 15 & ~9 17 & 2 4 5 7 10 12 14 16 & 8 \\ 
            &  & 13 16 & ~9 17 & 2 4 5 7 10 12 14 15 & 8 \\ 
            &  & 14 16 & ~9 17 & 2 4 5 7 10 12 13 15 & 8 \\ 
            &  & 14 16 & 13 & 2 4 5 7 10 12 9 15 17 & 9 \\ 
        \hline
        5.2 & 1 3 6 9 11 & 13 15 & 4 12 & 2 5 7 10 8 14 16 17 & 8 \\ 
         &  & 13 16 & 4 12 & 2 5 7 10 8 14 15 17 & 8 \\ 
         &  & 14 16 & 4 12 & 2 5 7 10 8 13 15 17 & 8 \\ 
         &  & 13 15 & 8 17 & 2 5 7 10 4 12 14 16 & 8 \\ 
         &  & 13 16 & 8 17 & 2 5 7 10 4 12 14 15 & 8 \\ 
         &  & 14 16 & 8 17 & 2 5 7 10 4 12 13 15 & 8 \\ 
         &  & 13 15 & 12 17 & 2 5 7 10 4 8 14 16 & 8 \\ 
         &  & 13 16 & 12 17 & 2 5 7 10 4 8 14 15 & 8 \\ 
         &  & 14 16 & 12 17 & 2 5 7 10 4 8 13 15 & 8 \\ 
        \hline
        5.3 & 1 3 6 9 12 & 14 16 & 4 11 & 2 5 7 8 10 13 15 17 & 8 \\ 
         &  & 14 16 & 11 17 & 2 5 7 8 10 13 4 15 & 8 \\ 
         &  & 14 16 & 4 15 & 2 5 7 8 10 13 11 17 & 8 \\ 
         &  & 14 16 & 15 17 & 2 5 7 8 10 13 4 11 & 8 \\ 
         &  & 15 & 4 11 & 2 5 7 8 10 13 14 16 17 & 9 \\ 
         &  & 15 & 11 17 & 2 5 7 8 10 13 4 14 16 & 9 \\ 
        \hline
        5.4 & 1 3 6 8 13 & 10 15 & 11 & 2 4 5 7 9 12 14 17 16 & 9 \\ 
         &  & 10 16 & 11 & 2 4 5 7 9 12 14 17 15 & 9 \\ 
         &  & 10 16 & 15 & 2 4 5 7 9 12 14 17 16 & 9 \\ 
         &  & 11 16 & 15 & 2 4 5 7 9 12 14 17 10 & 9 \\ 
        \hline
    \end{tabular}
    \caption{Possible left-maximal cases where $j_0=22$ and $j_e<10$. If $|J_a\cap J_b|=6$, an arbitrary element $x$ is to be removed from the list for $J_e$.}\label{table:jab=5}
\end{table}

By (4) we conclude that $|J_a\cap J_b| \geq 5$, and Claim 1 together with its symmetric version for $j_a$ imply that $j_e\geq 8$. Suppose that $J$ is left-maximal and $j_e\leq 9$. If $|J_a\cap J_b|=6$ then by Claim 3 the situation is as described in the first line of the table (Case 6.1). To prove the remainder of the claim, assume that $|J_a\cap J_b|=5$. We first establish that there are four equivalence classes for a 5-element set $J_a\cap J_b$ as listed in the table under 5.1--5.4. Recall that such a set must be an independent set in the circulant $C(17, \{1,4\})$. For equivalence we use dihedral symmetries and $\phi_4$.

For each case we then determine which elements could be added into $J_a$ and which into $J_b$. The third and the fourth column of the table contain all possibilities of extending $J_a$ and $J_b$ that are left-maximal and do not introduce another element in $J_a\cap J_b$. The fifth column then lists the corresponding sets $J_e$. The last possibility, Case 5.4, needs an additional explanation why we do not have $J_a\setminus J_b=\{11,15\}$. The reason is that in such a case, 10 and 16 would be in $J_e$ and we would have $j_e=10$.
\hfill $\diamond$

\bigskip
As mentioned earlier, the elements in [17] have natural circular distance. More precisely, we write $d(i,j)=k$ if $k\le8$ and $j=i\pm k\pmod{17}$.

    \medskip
    \underline{Claim 5:} Let $J,J' \subseteq [17]$ be such that $J\sim J'$.
    If there exist pairs $i_1,i_2$ and $i_3,i_4$ in $J'$ such that $d(i_1,i_2) = 2$ and $d(i_1,i_2) = 8$,
    then there exist pairs $k_1,k_2$ and $k_3,k_4$ in $J$ such that $d(k_1,k_2) = 2$ and $d(k_3,k_4) = 8$.
    \medskip

    Let $J$ and $J'$ be as described and let $i_1,i_2,i_3,i_4 \in J'$ be such that $d(i_1,i_2) = 2$ and $d(i_1,i_2)~=~8$.
    Trivially,  dihedral symmetries preserve circular distances, so if $J = \beta(J')$, we may let $k_x = \beta(i_x)$ for all $x \in [4]$.
    Next, observe that 
    any composition, $f \circ \beta(J')$ or $\beta\circ f(J')$, of a bijection $f:[17] \rightarrow [17]$ and a dihedral symmetry $\beta$ applied to $J'$ has the same circular distances as $f(J')$.
    Hence it is sufficient to show that applying $\phi_4$ or $\phi_4^{-1}$ 
    to $J'$ preserves the desired property. 
    
    Consider $\phi_4(J')$. 
    Since $d(i_1,i_2) = 2$ without loss of generality we may write $i_1 = i_2+2~\pmod{17}$.
    Then, 
    \begin{align}
        \phi_4(i_1) = 4i_1 = 4i_2 + 8 \pmod{17}
    \end{align}
    implying that $d(\phi_4(i_1), \phi_4(i_2)) = 8$.
    Similarly, $d(\phi_4(i_3), \phi_4(i_4)) = 2$.
    So we can take $k_1~=~\phi_4(i_3), k_2 = \phi_4(i_4)$, and we may take $k_3 = \phi_4(i_1), k_4 = \phi_4(i_2)$.
    The case $\phi_4^{-1}(J')$ follows by the same argument.
    \hfill $\diamond$
     


    \medskip
    \underline{Claim 6:} Suppose that $J\in\cI(\BS_0)$ is left-maximal, $j_e\le7$ and $j_0 \ge21$. Then $j_0=21$ and $J_e$ contains two pairs $i_1,i_2$ and $i_3,i_4$ such that $d(i_1,i_2)=2$ and $d(i_3,i_4)=8$. 
    \medskip

    If on the contrary $j_0=j_a+j_b+j_e\geq 22$, then $j_a+j_b\geq 15$. But then (without loss of generality) $j_b=8$, which by Claim 2 implies the contradiction $j_0\leq 21$. Hence $j_0=21$. 
    
    By (4) we must have $|J_a\cap J_b|\ge4$. If $|J_a\cap J_b|=6$, then Claim 3 implies that $j_0\le 13+j_e\leq 20$, a contradiction. Suppose now that $|J_a\cap J_b|=5$. Since $j_0=21$, there is precisely one element $i\in [17]$ that does not belong to $J=J_a\cup J_b\cup J_e$. Since $J$ is left-maximal we know that neither $J\cup\{ia\}$ nor $J\cup\{ib\}$ is independent. Therefore  $J'=J\cup\{ie\}$ is independent, and moreover it is left-maximal. Since $|J'|=22$, Claim 4 shows that $J'_e$ is equivalent to a set $J''_e$ obtained from one of the possibilities for $J_e$ in Table \ref{table:jab=5} by removing a single element. Each set $J_e$ in that table having 8 elements contains two disjoint pairs $i_1,i_2$ and $i_1',i_2'$ at distance 2 (consider pairs $5,7$, $2,4$ and $8,10$) and also contains two disjoint pairs at distance 8 (consider pairs $2,10$, $5,13$ and $5,14$). Thus, after removing a single element, one of each type of those pairs is still in $J''_e$.
    Thus, by Claim 5, $J'_e$ satisfies the conclusion of the claim.
    
    Finally, suppose that $|J_a\cap J_b|=4$. In this case, we must have $J_a\cup J_b\cup J_e = [17]$, thus any element that is not in $J_a\cup J_b$ must be in $J_e$. Using symmetries (including $\phi_4$), one can easily see that there are 8 equivalence classes for having four elements in $J_a\cap J_b$.
    They are listed in Table \ref{table:jab=4}.
    This part is left to the reader.
    By Claim 5, it is sufficient to prove the claim for these $8$ cases
    in order to show the result for any cases involving $|J_a\cap J_b|=4$.
    The table contains additional information if there are two elements $i_1,i_2$ that are forced to be in $J_e$ and have circular distance 2, and similarly for $i_3,i_4$. It remains to find $i_1,i_2$ or $i_3,i_4$ for the cases that are left blank in the table.

    \begin{table}[ht]
    \centering
    \begin{tabular}{|c|c|c|c|l|l|}
        \hline
        Case & $J_a\cap J_b$ & $i_1,i_2$ & $i_3,i_4$ & Candidates for $J_a\setminus J_b$ & Candidates for $J_b\setminus J_a$ \\ 
        \hline
        4.1 & 1 3 6 8 & 2 4 & ~ & 10--16 & 9 11 13 15 17 \\ 
        4.2 & 1 3 8 10 & ~ & ~ & 5 6 12--16 & 2 9 11 13 15 17 \\ 
        4.3 & 1 3 9 11 & ~ & ~ & 5--7 13--16 & 2 4 6 8 10 12 17 \\ 
        4.4 & 1 3 6 9 & 5 7 & 2 10 & 11--16 & 4 8 11 12 15 17 \\ 
        4.5 & 1 3 8 11 & ~ & 4 12 & 5 6 13--16 & 2 6 9 10 13 17 \\ 
        4.6 & 1 3 8 13 & 12 14 & 4 12 & 5 6 10 11 15 16 & 2 6 10 11 15 \\ 
        4.7 & 1 3 6 13 & 5 7 & 5 14 & 8--11 15 16 & 4 8 11 12 15 \\ 
        4.8 & 1 3 6 15 & 5 7 & 7 16 & 8--13 & 4 8 9 12 13 17 \\ 
        \hline
    \end{tabular}
    \caption{Possible cases where $j_0=21$. Those cases are indicated that already have $i_1,i_2$ with $d(i_1,i_2)=2$ and $i_1,i_2\notin J_a\cup J_b$. Similarly for $i_3,i_4$ at circular distance 8.}\label{table:jab=4}
\end{table}

    Let us start with 4.5.
    Notice that $4,7,12 \in J_e$.
    If $2\notin J_b$, then $(i_1,i_2)=(2,4)$ works. Thus, we may assume that $2\in J_b$. But then $6\notin J_b$ since $d(2,6)=4$. Again, we obtain $(i_1,i_2)=(4,6)$, so we may assume that $6\in J_a$. But then $5,7$ cannot belong to $J_a$, and since they also cannot be in $J_b$, they must be in $J_e$. Thus, $(i_1,i_2)=(5,7)$ works.

    We will discuss the remaining cases with fewer details. 
    For 4.1 we consider pairs (2,10), (4,12), (5,14), (7,16) and obtain $i_3,i_4$ as desired or conclude that 12, 14, 16 all belong to $J_a$. In the latter case, we consider (2,11) and (7,15). Since at most one of $11,15$ is in $J_b$ and none is in $J_a$, one of these pairs gives the desired pair $(i_3,i_4)$.

    For 4.2, (4,6) or (5,7) gives $(i_1,i_2)$. Next, we consider (4,12), (4,13), and (9,17). If the first fails, we have $12\in J_a$. If the second fails, we get $13\in J_b$. But now the last pair works. 

    The last case, 4.3, is the longest. We first assume that $5\in J_e$. Then (5,13) or (5,14) gives $(i_3,i_4)$. If there is no $(i_1,i_2)$ at distance 2, consider pairs (5,7), (13,15), (14,16). They imply that $7,13,16\in J_a$ and $14,15\in J_e$. Then $12,17\in J_b$, hence $4,8\in J_e$. Since $d(6,10)=4$, one of these two numbers is in $J_e$, and together with 8, we get the desired pair $(i_1,i_2)$. 
    
    Now suppose in 4.3 that $5 \notin J_e$. Then $5 \in J_a$ since $1 \in J_a \cap J_b$.
    Hence, $6 \notin J_a$ implying that $6 \in J_b$ or (exclusive) $6 \in J_e$.
    If $6 \in J_e$ then (6,14) or (6,15) gives $(i_3,i_4)$.
    Similarly, (4,6) or (6,8) gives $(i_1,i_2)$.
    Otherwise, $6 \in J_b$ implying $2 \in J_e$.
    In this case, (2,17) or (2,4) gives $(i_1,i_2)$.
    To get $(i_3,i_4)$ consider (2,10); since $9 \in J_a\cap J_b$ and $6 \in J_b$, we conclude $10 \in J_e$.
    This completes the proof of the claim.
    \hfill $\diamond$

\bigskip
The same conclusions as in the above claims hold for $J_f$ and for $j_1$ (by replacing the notion of left-maximal with its natural analogue right-maximal). Here we apply the symmetry $\phi_2: [17]\to [17]$ given by the mapping $i\mapsto 2i \pmod{17}$ which corresponds to an automorphism that maps $A$ to $C$, $B$ to $D$, and exchanges $E$ and $F$. 

To complete the proof, assume for a contradiction that a maximum independent set $J\in\cI(\BS)$ has size at least 44. Then we may assume $J_0=J\cap V(\BS_0)$ is left-maximal and $J_1=J\setminus J_0$ is right-maximal. If $|J|\geq 45$ then by Claim 2 we may assume without loss of generality that $|J_0|=23$ and $j_{ab}=6$, implying by Claim 3 that $j_e=10$. But then Claims 3 and 4 imply that $j_f\geq 8$, contradicting (3). Hence we may assume $|J|=44$.

We see that there are only two ways to obtain an independent set of size 44, up to symmetries. We either have $j_0=23$ and $j_1=21$, or (using Claim 4 and (3)) we have $j_0=j_1=22$ with $j_e<10$ and $j_f<10$.


First consider the case where $j_0=23$ and $j_1=21$.
By Claim 3, $$J_e\sim \{2,4,5,7,9,10,12, 14,16,17\}.$$

First suppose that there is a dihedral symmetry $\beta$ such that 
$$J_e = \beta(\{2,4,5,7,9,10,12, 14,16,17\}).$$
Given $J_e \cap J_f = \emptyset$, per (1),
it follows that $J_f \subseteq [17]\setminus \beta(\{2,4,5,7,9,10,12, 14,16,17\})$.
Thus, $J_f \subseteq \beta(\{1,3,6,8,11,13,15\})$.
From here the reader can easily verify $J_f$ cannot have two elements $i$, $j$ such that $d(i,j)=1$,
given $\beta$ preserves circular distance.

Now suppose that 
there exists a dihedral symmetry $\beta$ such that 
$$J_e = \beta\circ \phi_4(\{2,4,5,7,9,10,12, 14,16,17\}).$$
(Here it is possible that one should take $\phi_4 \circ \beta$, or $(\phi_4)^{-1} \circ \beta$, or $(\phi_4)^{-1} \circ \beta$, instead of $\beta\circ \phi_4$, but this has no significant effect on the following argument.)
Given $J_e \cap J_f = \emptyset$, per (1),
it follows that $J_f \subseteq [17]\setminus \beta\circ \phi_4(\{2,4,5,7,9,10,12, 14,16,17\}).$
Thus, $$J_f \subseteq \beta(\{1,4,7,9,10,12,15\}).$$
From here the reader can easily verify $J_f$ cannot have two elements $i$, $j$ such that $d(i,j)=4$,
given $\beta$ preserves circular distance.

Since $j_0 = 23$ we know by (4) that $j_{ab} \geq 6$.
Hence, by Claim 2 $j_{ab} = 6$.
Thus, by Claim 3 we have $j_e=10$, therefore we may assume that $j_f\le7$ by (3).
Since $j_1 = 21$, we may apply Claim~6 to the preimage of $I \cap (F\cup C\cup D)$ under $\phi_2$, concluding $J' = (\phi_2)^{-1}(J_f)$
contains two pairs $i_1,i_2$ and $i_3,i_4$ such that $d(i_1, i_2) = 2$ and $d(i_3, i_4) = 8$.
But this implies $d(\phi_2(i_1),\phi_2(i_2)) = d(2i_1,2i_2) = 4$
and $d(\phi_2(i_3),\phi_2(i_4)) = d(2i_3,2i_4) = 1$
since we are working modulo $17$.
Note that one can take $\phi_8$ rather than $\phi_2$ to achieve an analogous conclusion.
Thus, $J_f$ contains a pair of elements at circular distance $1$, and another pair of elements at circular distance $4$.
But this is a contradiction, since we have shown $J_f$ must either contain no pair of elements at circular distance $1$, 
or $J_f$ must contain no pair of elements at circular distance $4$.
We conclude that $j_0=23$ and $j_1=21$ is impossible.

Finally we consider the case in which $j_0=j_1=22$ with $j_e<10$ and $j_f<10$. Then we use the classification of possible cases given in Claim 4. 
A short computer program was used to confirm that for any two cases $J_e,J'_e$ from Table \ref{table:jab=5}, when we compare $J_e$ for the first one and the right version $J_f=\phi_2(J'_e)$, the two sets intersect. Moreover, they have nonempty intersection even if we apply any equivalence symmetry on $J_f$. This property violates (1) and shows that we cannot obtain an independent set of size 44 in this way.
This completes the proof of the proposition.
\end{proof}

It is indeed interesting to see how tight is the requirement that all cases lead to a contradiction since $J_e$ and $J_f$ cannot be fitted disjointly using any equivalences. 

\end{document}